\DeclareMathOperator{\sgn}{sgn}
\providecommand*{\cupdot}{%
  \mathbin{%
    \mathpalette\@cupdot{}%
  }%
}
\newcommand*{\@cupdot}[2]{%
  \ooalign{%
    $\m@th#1\cup$\cr
    \hidewidth$\m@th#1\cdot$\hidewidth
  }%
}
\newcommand{\xmapsto}[2][]{\ext@arrow 0599{\Mapstofill@}{#1}{#2}}
\def\Mapstofill@{\arrowfill@{\Mapstochar\relbar}\relbar\rightarrow}
\tikzset{*->-*/.style={decoration={
  markings,
  mark=at position .5 with {\arrow{>}},
  mark=at position 0 with { \draw [fill] (0,0) circle [radius=2pt];},
  mark=at position 1 with { \draw [fill] (0,0) circle [radius=2pt];}},postaction={decorate}}}
\tikzset{dot/.style={draw,shape=circle,fill=black,scale=.5}}
\tikzset{Bullet/.style={draw,shape=circle,fill=black,scale=.5}}
  \tikzset{*--*/.style={decoration={
  markings,
  mark=at position 0 with { \draw [fill] (0,0) circle [radius=2pt];},
  mark=at position 1 with { \draw [fill] (0,0) circle [radius=2pt];}},postaction={decorate}}}
\newtheorem{thm}{Theorem}[section]
\newtheorem{prop}[thm]{Proposition}
\newtheorem{lem}[thm]{Lemma}
\newtheorem{conj}[thm]{Conjecture} 
\newtheorem{definition}[thm]{Definition}
\newtheorem{example}[thm]{Example}
\newcommand{\RR}{\mathbb{R}}  
\newcommand{\ZZ}{\mathbb{Z}}
\newcommand{\ubar}{\underline}
\newcommand{\PP}{\mathcal{P}}
\newcommand{\xx}{\mathbf{x}}
\newcommand{\PPP}{\mathbb{P}}
\newcommand{\EE}{\mathcal{E}}
\newcommand{\omitt}[1]{}
\begin{document}

\title{The frequency of pattern occurrence in random walks}

\author{Sergi Elizalde}
\author{Megan Martinez}
\address{Department of Mathematics, Dartmouth College, 
Hanover, NH 03755}
\email{sergi.elizalde@dartmouth.edu}
\email{megan.a.martinez.gr@dartmouth.edu}

\begin{abstract}
In the past decade, the use of ordinal patterns in the analysis of time series and
dynamical systems has become an important and rich tool. Ordinal patterns (otherwise known
as a permutation patterns) are found in time series by taking $n$ data points at evenly-spaced
time intervals and mapping them to a length-$n$ permutation determined by relative ordering.
The frequency with which certain patterns occur is a useful statistic for such series;
however, the behavior of the frequency of pattern occurrence is unstudied for most models.
We look at the frequency of pattern occurrence in random walks in discrete time and, applying combinatorial methods,
we characterize those patterns that have equal frequency, regardless of probability distribution.
\end{abstract}

\subjclass[2010]{05A05, 60G50 (primary); 37M10 (secondary)}

 \maketitle
 
 %==================================================
 \section{Introduction}
 %==================================================

Time series analysis deals with the extraction of information from sequences of data points, typically measured at uniform time intervals. Understanding the characteristics of the data enables better predictions of the future behavior of a phenomenon.
There are a number of different statistical methods that can be applied to the study of time series. A relatively new method involves
the analysis of its ``ordinal patterns.'' This approach, pioneered in the dynamical systems community by Bandt, Keller and Pompe~\cite{BP,BKP} and surveyed by Amig\'o~\cite{Amigobook}, is particularly amenable to a combinatorial treatment. For one-dimensional deterministic time series that arise from iterating a map, combinatorial analyses of the ordinal patterns for specific maps have appeared in \cite{AEK,Elishifts,Elibeta,ArcEli}. However, perhaps surprisingly, very little is known about the behavior of ordinal patterns in a random setting. 

In this paper we study, from a combinatorial perspective, the ordinal patterns that occur in random walks. We provide a combinatorial characterization of equivalence classes of patterns that occur with the same probability in any random walk.
We expect such a characterization to be useful in many of the applications of ordinal patterns in random walks that have recently appeared in the dynamical systems literature.
Such applications include the analysis of stock indices and economic indicators, both to quantify the randomness of certain time periods in the series~\cite{Zanin}, and to show that the degree of market inefficiency is correlated with the number of missing patterns~\cite{Zunino}. In the related setting of 
Gaussian processes with stationary increments, the frequency of ordinal patterns has been estimated in~\cite{SK} and computed exactly for some small patterns in~\cite{BS}.

Permutation patterns are found in a time series by taking $n$ data points at evenly-spaced time intervals and mapping them to a length-$n$ permutation determined by relative ordering.  For example, a sequence $4.8,-4.1,3.1,5.2$ would map to the permutation $3124 \in S_4$.  The frequencies of the patterns that occur are measured and used to make conclusions about the behavior of the data.  Central to this analysis is an understanding of the frequency with which patterns occur in a random time series.

Among the different models that are used for random time series, one of the most basic and applicable is a one-dimensional random walk in discrete time.  To construct such a walk, take $n-1$ independent and identically distributed (i.i.d.) random variables $X_1,X_2,\ldots,X_{n-1}$; we call these \emph{steps}.  At time $0$, the walker is at $0$ and at time $i$ the walker is at $X_1+X_2+\ldots+X_i$.  It is easy to see that not all permutation patterns occur with equal probability in such a random walk. For example, if the $X_i$'s are chosen from a distribution that only takes positive values, then the pattern $123\ldots n$ will occur with probability~1.

Define a map $p:{\RR}^{n-1} \rightarrow S_n$ where $p(X_1,X_2,\ldots,X_{n-1})=\pi$ if the entries of the permutation $\pi$ have the same relative ordering as the walk $Z_0, Z_1, Z_2, \ldots, Z_{n-1}$, where $Z_i=X_1+\dots+X_i$ for all $i$ (with the convention that $Z_0=0$).
More precisely, $p(X_1,X_2,\ldots,X_{n-1})=\pi$ if $\pi(i)=| \{k \mid Z_k \le Z_i \}|$ for all $i$.  If the associated random walk contains repeated values, i.e. there exist some $i,j$ such that $Z_i=Z_j$, it will be our convention to leave $p$ undefined.  Since we only deal with continuous distributions, the probability of having repeated values is zero.

While not all permutations occur in the image of $p$ with equal probability, it turns out that there are certain classes of permutations that do have equal probability of occurring, regardless of the probability distribution chosen for the $X_i$'s.  For example, the pattern $132$ will always occur with equal probability as $213$, since $p(X_1,X_2)=132$ if and only if $p(X_2,X_1)=213$.  In general, the reverse-complement of a permutation will occur with equal probability as the permutation itself.  It turns out that such equivalencies are not restricted to reverse-complements, but are littered across~$S_n$. For example, $1432$ and $2143$ also occur with equal probability because $p(X_1,X_2,X_3)=1432$ if and only if $p(X_2,X_1,X_3)=2143$.

We are interested in equivalence classes of permutations for which any two patterns $\pi$ and $\tau$ in the same class have $\PPP(p(X_1,X_2,\ldots,X_{n-1})=\pi)=\PPP(p(X_1,X_2,\ldots,X_{n-1})=\tau)$, for any continuous probability distribution on the random variables $X_i$ (with the only requirement that they are i.i.d.).  

In this paper we define a natural equivalence relation on permutations that satisfies this property, and we completely characterize the corresponding equivalence classes. The main theorem is stated in Section~\ref{Equivalence}.  Sections~\ref{Valid Flips}, \ref{Irreducible Intervals}, and~\ref{Cohesive Intervals and Partitions} introduce the tools and ideas used in the proof. Finally, in Section \ref{Future Work} we state a conjecture that would strengthen our results. Some of the proofs are omitted in this extended abstract, and will appear in a future version of the paper.

 %==================================================
 \section{Equivalence on Permutations} \label{Equivalence}
 %==================================================
 
We define a natural equivalence relation on permutations $\pi, \tau \in S_n$, which is suggested by the above examples.  We let $\pi \sim \tau$ if there exists some $\rho \in S_{n-1}$ such that $p(x_1,x_2,\ldots,x_{n-1})=\pi$ if and only if $p(x_{\rho(1)},x_{\rho(2)},\ldots,x_{\rho(n-1)})=\tau$, for every $x_1,\dots,x_n\in\mathbb{R}$.  If $X_1,X_2,\ldots,X_{n-1}$ are i.i.d. random variables, then  the sequences $(X_1,\ldots,X_{n-1})$ and $(X_{\rho(1)},X_{\rho(2)},\ldots,X_{\rho(n-1)})$ have the same joint probability distribution, and so $\pi \sim \tau$ implies that $\PPP(p(X_1,X_2,\ldots,X_{n-1})=\pi)=\PPP(p(X_1,X_2,\ldots,X_{n-1})=\tau)$ for any continuous probability distribution on the random variables~$X_i$.
 
  The main result of this paper precisely characterizes the equivalence classes for $\sim$.  Our characterization is best illustrated by displaying permutations $\pi\in S_n$ on an $n\times n$ grid by filling the boxes $(i,\pi(i))$ for $1\le i\le n$ with a dot.  Our convention is that the $(i,j)$ box is in the $i$th column from the left and the $j$th row from the bottom, as in cartesian coordinates.

To state the main theorem, a few definitions need to be introduced.  A \emph{block} in a permutation is a set of consecutive entries whose values also form a consecutive set.  On the grid, a block is a square subgrid with a dot in each row and column, which implies that the regions right, left, above, and below a block are empty.  A \emph{cylindrical block} is a generalization of this notion where the requirement of consecutive positions is relaxed by considering $1$ and $n$ to be consecutive. If we identify the left and right edges of our $n\times n$ grid, then a cylindrical block is a square subgrid of the resulting cylinder with a dot in each row and column. Note that a cylindrical block can be either a regular block or a block that spans the left and right sides of the grid.  We say that a cylindrical block is \emph{bordered} if the entries in the block with highest and lowest value occur precisely at the outer positions (see Figure~\ref{CylEx}).

\begin{figure}[htb]
	\centering
	\begin{subfigure}[b]{0.275\textwidth} %The bordered cylindrical block $85764$
		\centering
		\begin{sideways}
		\begin{minipage}{3cm}
		\begin{tikzpicture} [scale=.35,every node/.style={draw,shape=circle,fill=black,scale=.5}]
			\draw[gray] (0,0) grid (10,10);
			\draw[fill=lightgray] (0,7) rectangle (1,8);
			\draw[fill=lightgray] (4,3) rectangle (5,4);
			\draw[very thick] (0,3) rectangle (5,8);
			\draw (0.5,7.5) node {} ;
			\draw (1.5,4.5) node {} ;
			\draw (2.5,6.5) node {} ;
			\draw (3.5,5.5) node {} ;
			\draw (4.5,3.5) node {} ;
			\draw (5.5,8.5) node {} ;
			\draw (6.5,1.5) node {} ;
			\draw (7.5,9.5) node {} ;
			\draw (8.5,0.5) node {} ;
			\draw (9.5,2.5) node {} ;
			\draw[black] (4.3,-.2)--(4.5,0)--(4.3,.2);
			\draw[black] (4.3,9.8)--(4.5,10)--(4.3,10.2);
			\draw[black] (4.5,-.2)--(4.7,0)--(4.5,.2);
			\draw[black] (4.5,9.8)--(4.7,10)--(4.5,10.2);
		\end{tikzpicture}
		\end{minipage}
		\end{sideways}
		\caption{\small The bordered cylindrical block $1\,3\,4\,2\,5$.}
	\end{subfigure}
	\hspace{0.05\textwidth}
	\begin{subfigure}[b]{0.275\textwidth} %The unbordered cylindrical block $4,9,2,10,1,3$ (Ref=CylBlock)
		\centering
		\begin{sideways}
		\begin{minipage}{3cm}
		\begin{tikzpicture} [scale=.35,every node/.style={draw,shape=circle,fill=black,scale=.5}]
			\draw[gray] (0,0) grid (10,10);
			\draw[very thick] (4,0)--(4,4)--(10,4)--(10,0);
			\draw[very thick] (4,10)--(4,8)--(10,8)--(10,10);
			\draw (0.5,7.5) node {} ;
			\draw (1.5,4.5) node {} ;
			\draw (2.5,6.5) node {} ;
			\draw (3.5,5.5) node {} ;
			\draw (4.5,3.5) node {} ;
			\draw (5.5,8.5) node {} ;
			\draw (6.5,1.5) node {} ;
			\draw (7.5,9.5) node {} ;
			\draw (8.5,0.5) node {} ;
			\draw (9.5,2.5) node {} ;
			\draw[black] (4.3,-.2)--(4.5,0)--(4.3,.2);
			\draw[black] (4.3,9.8)--(4.5,10)--(4.3,10.2);
			\draw[black] (4.5,-.2)--(4.7,0)--(4.5,.2);
			\draw[black] (4.5,9.8)--(4.7,10)--(4.5,10.2);
		\end{tikzpicture}
		\end{minipage}
		\end{sideways}
		\caption{\small The unbordered cylindrical block $5\,10\,7\,9\,8\,6$.} \label{CylBlock}
    	\end{subfigure}
	\hspace{0.05\textwidth}
	\begin{subfigure}[b]{0.275\textwidth} %The bordered cylindrical block $9,2,10,1,3$
		\centering
		\begin{sideways}
		\begin{minipage}{3cm}
			\begin{tikzpicture} [scale=.35,every node/.style={draw,shape=circle,fill=black,scale=.5}]
			\draw[gray] (0,0) grid (10,10);
			\draw[fill=lightgray] (5,8) rectangle (6,9);
			\draw[fill=lightgray] (9,2) rectangle (10,3);
			\draw[very thick] (5,0)--(5,3)--(10,3)--(10,0);
			\draw[very thick] (5,10)--(5,8)--(10,8)--(10,10);
			\draw (0.5,7.5) node {} ;
			\draw (1.5,4.5) node {} ;
			\draw (2.5,6.5) node {} ;
			\draw (3.5,5.5) node {} ;
			\draw (4.5,3.5) node {} ;
			\draw (5.5,8.5) node {} ;
			\draw (6.5,1.5) node {} ;
			\draw (7.5,9.5) node {} ;
			\draw (8.5,0.5) node {} ;
			\draw (9.5,2.5) node {} ;
			\draw[black] (4.3,-.2)--(4.5,0)--(4.3,.2);
			\draw[black] (4.3,9.8)--(4.5,10)--(4.3,10.2);
			\draw[black] (4.5,-.2)--(4.7,0)--(4.5,.2);
			\draw[black] (4.5,9.8)--(4.7,10)--(4.5,10.2);
		\end{tikzpicture}
		\end{minipage}
		\end{sideways}
		\caption{\small The bordered cylindrical block $10\,7\,9\,8\,6$.} 
	\end{subfigure}
	\caption{Examples of bordered and unbordered cylindrical blocks in $\pi=8\,6\,1\,3\,4\,2\,5\,10\,7\,9$.} \label{CylEx}
\end{figure}
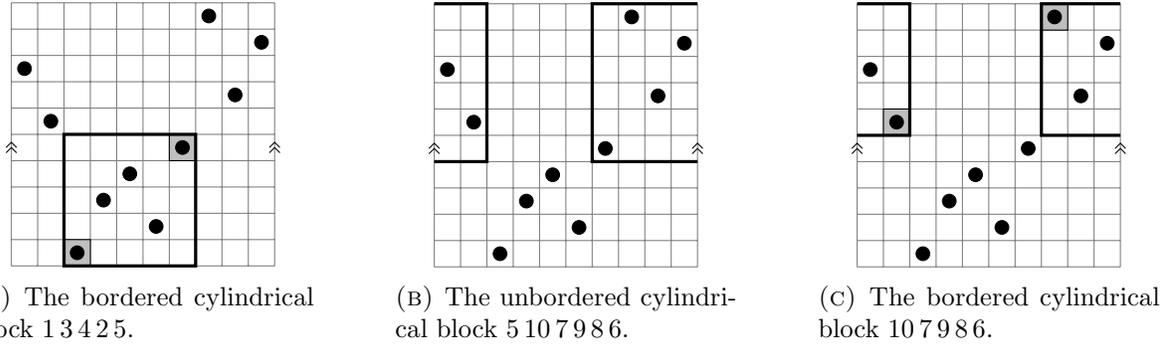
%==================================================================

Given a permutation that contains a bordered cylindrical block, we can generate another permutation by performing a \emph{flip} on the bordered cylindrical block.  For a regular block, a flip is simply a 180$^\circ$ rotation of the contents of the block.  For a cylindrical block that spans the left/right boundary, a flip is akin to a 180$^\circ$ rotation, except the entries on the left side of the block are rotated and moved to the right and vice-versa, while the entries that are not part of the block are shifted right or left accordingly (see Figure~\ref{thm example}). Recall that the reverse-complement operation on $\pi$ corresponds to a 180$^\circ$ rotation of the whole $n\times n$ grid. Our main theorem characterizes equivalence classes in $S_n$ in terms of flips: 

\begin{thm} \label{Main Thm}
Let $\pi, \tau \in S_n$.  Then $\pi \sim \tau$ if and only if $\tau$ can be obtained from $\pi$ through a sequence of flips of bordered cylindrical blocks and the reverse-complement operation.
\end{thm}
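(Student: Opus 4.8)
The plan is to prove the two directions separately, with the ``if'' direction being the warm-up and the ``only if'' direction being the real work. For the ``if'' direction, it suffices to show that a single flip of a bordered cylindrical block realizes $\pi\sim\tau$, since $\sim$ is transitive and reverse-complement is already known to be an equivalence (it is the case $\rho=$ reversal). So I would take a bordered cylindrical block occupying a set of consecutive positions $I$ (cyclically), exhibit the permutation $\rho\in S_{n-1}$ of the step-indices that reverses the order of the steps $X_i$ whose index falls ``inside'' the block while leaving the others in place (with the appropriate cyclic shift when the block wraps around the boundary), and verify directly from the definition $p(x_1,\dots,x_{n-1})=\pi\iff p(x_{\rho(1)},\dots,x_{\rho(n-1)})=\tau$. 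The bordered hypothesis is exactly what makes this clean: because the extreme values of the block sit at its outer positions, reversing the internal steps rotates the block $180^\circ$ and leaves the partial sums outside the block (up to a global additive shift, which does not affect relative order) unchanged, so the pattern outside is untouched and the pattern inside is reverse-complemented.

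For the ``only if'' direction, the goal is: if $\pi\sim\tau$ via some $\rho\in S_{n-1}$, then $\tau$ is reachable from $\pi$ by flips and reverse-complement. Here I expect to lean heavily on the machinery flagged in the introduction — ``valid flips'' (Section~\ref{Valid Flips}), ``irreducible intervals'' (Section~\ref{Irreducible Intervals}), and ``cohesive intervals and partitions'' (Section~\ref{Cohesive Intervals and Partitions}). The strategy is to analyze which permutations $\rho$ of the steps can possibly yield a valid equivalence: the condition that $p(x_{\rho(1)},\dots,x_{\rho(n-1)})$ be a well-defined permutation of $\{1,\dots,n\}$ for \emph{every} choice of reals $x_i$ is extremely restrictive. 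I would argue that $\rho$ must decompose the index set $\{1,\dots,n-1\}$ into ``cohesive'' intervals on each of which $\rho$ acts either as the identity or as a reversal, and that each reversal block corresponds, back on the permutation grid, to a bordered cylindrical block — with the global reversal option corresponding to reverse-complement. Composing the elementary flips read off from this decomposition then transforms $\pi$ into $\tau$.

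The crux — and the main obstacle — is the classification of admissible $\rho$: showing that the only step-permutations $\rho$ for which $x\mapsto p(x)=\pi$ is equivalent to $x\mapsto p(x_{\rho(\cdot)})=\tau$ (identically in $x$) are the ones built from reversals of cohesive intervals. This is a statement about \emph{all} $\pi$ simultaneously, so the argument cannot fix $\pi$; instead one must show that if $\rho$ ever mixes indices across what should be a block boundary, there is \emph{some} pattern $\pi$ and some choice of reals witnessing that $p(x_{\rho(1)},\dots,x_{\rho(n-1)})$ is either undefined (a tie appears) or fails to have a consistent relative order — contradicting $\pi\sim\tau$. Concretely, I would isolate a minimal ``bad'' configuration of three or four indices that $\rho$ scrambles, build an explicit sequence of reals on which the rearranged walk develops a collision or an order-reversal incompatible with any single target $\tau$, and run an induction on $n$ (or on the number of cohesive intervals) to peel off one block at a time. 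Once the structure theorem for $\rho$ is in hand, translating each reversal-of-a-cohesive-interval into a flip of a bordered cylindrical block on the grid, and checking that these flips compose to send $\pi$ to $\tau$, is bookkeeping of the kind already set up in Sections~\ref{Valid Flips}--\ref{Cohesive Intervals and Partitions}.
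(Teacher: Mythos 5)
Your ``if'' direction is essentially right for blocks that do not wrap around the boundary: reversing the internal steps $X_i,\dots,X_{j-1}$ of a block occupying positions $i,\dots,j$ leaves the walk unchanged outside the block and reflects the values inside it through $Z_{i-1}+Z_{j-1}$, and borderedness is exactly what guarantees that the reflected values fill the same interval of reals, so the pattern outside is untouched and the pattern inside is reverse-complemented. Be aware, though, that for a wrap-around block the required $\rho$ is \emph{not} a reversal of an interval of step-indices (see below), so ``the appropriate cyclic shift'' conceals a genuine computation; still, this direction is patchable.

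The ``only if'' direction has a real gap: the classification of admissible $\rho$ that you propose to prove is false. It is not true that every step-permutation $\rho$ realizing $\pi\sim\tau$ decomposes the index set $\{1,\dots,n-1\}$ into intervals on which it acts as the identity or as a reversal. For the single wrap-around flip $\pi=5712634\mapsto\tau=5637124$, matching the edges of the two walks shows that the induced $\rho$ is $645321$, with cycle structure $(1\,6)(2\,4\,3\,5)$; it is not even an involution, let alone a product of disjoint interval reversals. The obstruction is structural: a flip is determined by a set of consecutive \emph{values}, and the steps it rearranges sit at the scattered positions $\pi^{-1}$ of those values, so the interval structure lives on the value axis, not the index axis. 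The step the paper takes, and that your proposal is missing, is to transfer the whole problem from $\rho$ to a permutation $\sigma$ of the \emph{levels}: one introduces the matrix $L(\pi)$ recording which levels each step crosses, proves $L$ is a homomorphism and that multiplication by $L(\pi)$ bijects $\RR^{n-1}_{>0}$ onto $D_\pi=\{\xx\mid p(\xx)=\pi\}$, and concludes (Lemma~\ref{permutation}) that $\pi\sim\tau$ iff $P_{\rho^{-1}}L(\pi)P_\sigma=L(\tau)$ for some $\rho,\sigma$ --- the key point being that the integer matrix $L(\pi)^{-1}P_\rho L(\tau)$ and its inverse preserve the positive orthant and hence must be a permutation matrix $P_\sigma$. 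Only after this reduction do the irreducible and cohesive intervals of Sections~\ref{Irreducible Intervals}--\ref{Cohesive Intervals and Partitions} enter, and they partition the \emph{levels} acted on by $\sigma$, not the step-indices acted on by $\rho$. Relatedly, your remark that the classification ``is a statement about all $\pi$ simultaneously'' misreads the quantifiers: $\pi$ and $\tau$ are fixed and only the real vector $\xx$ is universally quantified, so the admissible $\sigma$ --- and the resulting partition into irreducible intervals --- depend on $\pi$, which is precisely what the induction on cohesive partitions (Proposition~\ref{valid flips}) exploits.
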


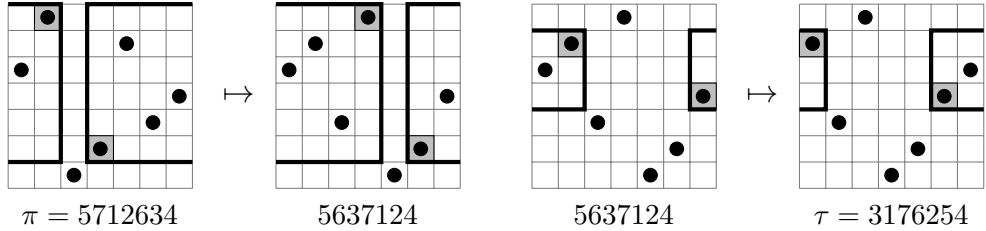
\begin{figure}[htb]
\centering
	\begin{tikzpicture} [scale=.35]
		\draw[gray] (0,0) grid (7,7);
		\draw[fill=lightgray] (1,6) rectangle (2,7);
		\draw[fill=lightgray] (3,1) rectangle (4,2);
		\draw[ultra thick] (7,1)--(3,1)--(3,7)--(7,7);
		\draw[ultra thick] (0,1)--(2,1)--(2,7)--(0,7);
		\draw (0.5,4.5) node[Bullet] {} ;
		\draw (1.5,6.5) node[Bullet] {} ;
		\draw (2.5,0.5) node[Bullet] {} ;
		\draw (3.5,1.5) node[Bullet] {} ;
		\draw (4.5,5.5) node[Bullet] {} ;
		\draw (5.5,2.5) node[Bullet] {} ;
		\draw (6.5,3.5) node[Bullet] {} ;
		\draw (3.5,-1) node{$\pi=5712634$};
		\draw (8.75,3.5) node{\large$\mapsto$};
		%\draw[very thick, dotted] (.5,4.5)--(1.5,6.5)--(2.5,.5)--(3.5,1.5)--(4.5,5.5)--(5.5,2.5)--(6.5,3.5);
	\end{tikzpicture}
	\begin{tikzpicture} [scale=.35]
		\draw[gray] (0,0) grid (7,7);
		\draw[fill=lightgray] (3,6) rectangle (4,7);
		\draw[fill=lightgray] (5,1) rectangle (6,2);
		\draw[ultra thick] (7,1)--(5,1)--(5,7)--(7,7);
		\draw[ultra thick] (0,1)--(4,1)--(4,7)--(0,7);
		\draw (0.5,4.5) node[Bullet] {} ;
		\draw (1.5,5.5) node[Bullet] {} ;
		\draw (2.5,2.5) node[Bullet] {} ;
		\draw (3.5,6.5) node[Bullet] {} ;
		\draw (4.5,0.5) node[Bullet] {} ;
		\draw (5.5,1.5) node[Bullet] {} ;
		\draw (6.5,3.5) node[Bullet] {} ;
		\draw (3.5,-1) node{$5637124$};
		%\draw[very thick, dotted] (.5,4.5)--(1.5,5.5)--(2.5,.5)--(3.5,2.5)--(4.5,6.5)--(5.5,0.5)--(6.5,3.5);
	\end{tikzpicture} \quad \quad
	\begin{tikzpicture} [scale=.35]
		\draw[gray] (0,0) grid (7,7);
		\draw[fill=lightgray] (1,5) rectangle (2,6);
		\draw[fill=lightgray] (6,3) rectangle (7,4);
		\draw[ultra thick] (0,3)--(2,3)--(2,6)--(0,6);
		\draw[ultra thick] (7,3)--(6,3)--(6,6)--(7,6);
		\draw (0.5,4.5) node[Bullet] {} ;
		\draw (1.5,5.5) node[Bullet] {} ;
		\draw (2.5,2.5) node[Bullet] {} ;
		\draw (3.5,6.5) node[Bullet] {} ;
		\draw (4.5,0.5) node[Bullet] {} ;
		\draw (5.5,1.5) node[Bullet] {} ;
		\draw (6.5,3.5) node[Bullet] {} ;
		\draw (8.75,3.5) node{\large$\mapsto$};
		\draw (3.5,-1) node{$5637124$};
	\end{tikzpicture}
	\begin{tikzpicture} [scale=.35]
		\draw[gray] (0,0) grid (7,7);
		\draw[fill=lightgray] (0,5) rectangle (1,6);
		\draw[fill=lightgray] (5,3) rectangle (6,4);
		\draw[ultra thick] (0,3)--(1,3)--(1,6)--(0,6);
		\draw[ultra thick] (7,3)--(5,3)--(5,6)--(7,6);
		\draw (0.5,5.5) node[Bullet] {} ;
		\draw (1.5,2.5) node[Bullet] {} ;
		\draw (2.5,6.5) node[Bullet] {} ;
		\draw (3.5,0.5) node[Bullet] {} ;
		\draw (4.5,1.5) node[Bullet] {} ;
		\draw (5.5,3.5) node[Bullet] {} ;
		\draw (6.5,4.5) node[Bullet] {} ;
		\draw (3.5,-1) node{$\tau=3176254$};
		%\draw[very thick, dotted] (.5,5.5)--(1.5,2.5)--(2.5,6.5)--(3.5,0.5)--(4.5,1.5)--(5.5,3.5)--(6.5,4.5);
		\end{tikzpicture}
	\caption{A sequence of flips of bordered cylindrical blocks that maps $\pi=5712634$ to $\tau=6371245$.} 
	\label{thm example}
\end{figure}
 
 %=========================================================

Proving this result requires us to delve into the structure of permutations.  Our aim is to understand how we can permute the steps of one permutation to get another.  We define a mapping $L:S_n \rightarrow GL_{n-1}(\mathbb{C})$ that encodes the structure of~$\pi$.  Let $L(\pi)$ be the $(n-1)\times(n-1)$ matrix with entries $-1,0,1$ where \[(L(\pi))_{ij}=\begin{cases} \sgn(\pi(i+1)-\pi(i))), & \mbox{if } \pi(i) \leq j < \pi(i+1) \mbox{ or } \pi(i+1) \leq j < \pi(i),\\ 0, & \mbox{otherwise.}  \end{cases}\]

For example, $$L(32541)= \left[ \begin{array}{cccc} 0 & -1 & 0 & 0 \\ 0 & 1 & 1 & 1 \\ 0 & 0 & 0 & -1 \\ -1 & -1 & -1 & 0 \end{array} \right].$$

The following lemma states that $L$ is a group homomorphism. Thus, 
$L$ gives a representation of the symmetric group, which can be shown to be isomorphic to the standard representation.

\begin{lem} \label{rep}
For every $\pi, \tau \in S_n$, we have $L(\tau \pi)=L(\pi)L(\tau)$.
\end{lem}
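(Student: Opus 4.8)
The plan is to realize $L$ as a conjugate of the coordinate–permutation action of $S_n$ on $\RR^n$, restricted to the codimension-one subspace of increments; on that subspace the action is the standard representation, and the composition law becomes a one-line consequence of an intertwining identity. Let $D\colon\RR^n\to\RR^{n-1}$ be the difference (increment) operator $(Dz)_i=z_{i+1}-z_i$ for $1\le i\le n-1$, so that $D$ sends a walk $(z_1,\dots,z_n)$ to its sequence of steps, and for $\sigma\in S_n$ let $P_\sigma\colon\RR^n\to\RR^n$ be the coordinate permutation $(P_\sigma z)_i=z_{\sigma(i)}$. Three elementary facts will be used: $D$ is surjective with kernel the line $\RR\mathbf 1$ spanned by the all-ones vector; $P_\sigma\mathbf 1=\mathbf 1$; and $P_\pi P_\tau=P_{\tau\pi}$ (here $P_\tau$ is applied first, so that $(P_\pi P_\tau z)_i=z_{(\tau\pi)(i)}$).

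The heart of the argument is the intertwining identity
\[
L(\pi)\,D \;=\; D\,P_\pi \qquad \text{as maps } \RR^n\to\RR^{n-1}.
\]
I would verify it by comparing $i$-th coordinates on an arbitrary $z$. The right side is $(DP_\pi z)_i=(P_\pi z)_{i+1}-(P_\pi z)_i=z_{\pi(i+1)}-z_{\pi(i)}$. For the left side, $(L(\pi)Dz)_i=\sum_{j=1}^{n-1}(L(\pi))_{ij}(z_{j+1}-z_j)$; by the definition of $L(\pi)$ the nonzero terms are exactly those with $j$ running over the integers from $\min(\pi(i),\pi(i+1))$ to $\max(\pi(i),\pi(i+1))-1$, each with coefficient $\sgn(\pi(i+1)-\pi(i))$, so the sum telescopes to $z_{\pi(i+1)}-z_{\pi(i)}$ (treating the cases $\pi(i)<\pi(i+1)$ and $\pi(i)>\pi(i+1)$ separately, and using that $\pi$ is injective so $\pi(i)\ne\pi(i+1)$). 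Hence the two sides agree. This is the only computation in the proof, and its only subtlety is the two-case telescoping; I do not expect a genuine obstacle here.

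Granting the identity, the lemma is immediate:
\[
L(\tau\pi)\,D \;=\; D\,P_{\tau\pi} \;=\; D\,P_\pi P_\tau \;=\; \bigl(L(\pi)D\bigr)P_\tau \;=\; L(\pi)\bigl(DP_\tau\bigr) \;=\; L(\pi)L(\tau)\,D ,
\]
and since $D$ is surjective we may cancel it on the right to obtain $L(\tau\pi)=L(\pi)L(\tau)$. As a bonus, $L(\mathrm{id})=I_{n-1}$ directly from the definition, so the identity also gives $L(\pi)L(\pi^{-1})=L(\pi^{-1})L(\pi)=I_{n-1}$, confirming that $L$ indeed takes values in $GL_{n-1}$. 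If one prefers to stay with explicit matrices throughout, the same proof works after replacing $D$ by the fixed matrix right inverse $(\hat D v)_i=v_1+\dots+v_{i-1}$ (the partial-sum operator), which satisfies $D\hat D=I_{n-1}$ and $\hat D D=I_n-\mathbf 1 e_1^{\top}$ with $e_1$ the first coordinate vector; then $L(\pi)=DP_\pi\hat D$ and $L(\pi)L(\tau)=DP_\pi(\hat D D)P_\tau\hat D=DP_\pi P_\tau\hat D=L(\tau\pi)$, using $P_\pi\mathbf 1=\mathbf 1$ and $D\mathbf 1=0$.
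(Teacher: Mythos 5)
Your proof is correct, and it takes a genuinely different route from the paper's. The paper works row by row in the language of edge diagrams: the $i$-th row of $L(\pi)L(\tau)$ is identified with a signed sum of edges $e^\tau_j$ that form a path in the edge diagram of $\tau$ from $\tau(\pi(i))$ to $\tau(\pi(i+1))$, and this path telescopes to the interval of levels that defines the $i$-th row of $L(\tau\pi)$. You instead package the same telescoping computation once and for all into the intertwining identity $L(\pi)D=DP_\pi$, after which the composition law is formal linear algebra (cancelling the surjective $D$, or equivalently writing $L(\pi)=DP_\pi\hat D$ and using $P_\pi\mathbf 1=\mathbf 1$, $D\mathbf 1=0$). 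The computational core --- a signed sum of consecutive differences collapsing to a single difference --- is identical in both arguments; what your framing buys is a conceptual explanation of why $L$ is a homomorphism at all (it is the coordinate-permutation action descended to $\RR^n/\RR\mathbf 1$), and it proves for free the paper's parenthetical remark that $L$ is isomorphic to the standard representation, which the paper asserts without proof. The only point worth being careful about is the composition convention: with $(P_\sigma z)_i=z_{\sigma(i)}$ one indeed gets $P_\pi P_\tau=P_{\tau\pi}$, which matches the order reversal $L(\tau\pi)=L(\pi)L(\tau)$ in the statement, and you have handled this correctly.
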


It follows from Lemma~\ref{rep} that $L(\pi^{-1})=L(\pi)^{-1}$, and, in particular, that $L(\pi)$ is always invertible.  Additionally, since $L(\pi)$ and $L(\pi^{-1})$ are matrices with integral entries, they have integral determinants, and so $\det(L(\pi))=\pm1$.

Before proving Lemma~\ref{rep}, it is helpful to develop some intuition and terminology concerning the structure that is encoded in $L(\pi)$.  For real numbers $x_i$ to satisfy $p(x_1,x_2,\ldots,x_{n-1})=\pi$, there are some forced relationships among them.  For example, if $p(x_1,x_2,x_3)=1423$, then
$x_1+x_2>0$ and $x_1>x_3$, among other relations.

%Since our work is inspired by time series analysis, it is natural to draw permutations in $\RR^2$ as paths where $(i,\pi(i))$ is connected to $(i+1,\pi(i+1))$.  This line segment represents the output of the random variable~$X_i$. 

%The information about the $y$-coordinates that each segment spans points to a relationship between the outputs of the steps $X_i$ that form the pattern $\pi$.  For instance, in the leftmost diagram in Figure \ref{Level Ex} we can observe that the value output by $X_4$ must be positive and less than the absolute value of the output by $X_2$ in order for $\pi$ occur.  It is these relationships that are distilled in $L(\pi)$.

It will be convenient to draw each row of $L(\pi)$ as a vertical, directed edge. In the coordinate plane, we draw edge $e_i$ as the line segment that connects $(i, \pi(i))$ to $(i, \pi(i+1))$, where the edge is directed upwards if $\pi(i+1)>\pi(i)$ and downwards if $\pi(i)>\pi(i+1)$. Define $\sgn(e_i)=\sin(\pi(i+1)-\pi(i))$.  We denote the set of $n-1$ edges corresponding to $\pi$ by $\EE_{\pi}$ and call it its \emph{edge diagram}.  An example is shown in Figure~\ref{Level Ex}. We think of the $y$-coordinates $1,2,\ldots,n$ as vertices, and so we consider $e_i$ as a directed edge from vertex $\pi(i)$ to vertex $\pi(i+1)$.  With this interpretation, a sequence of edges, $e_i,e_{i+1},\ldots,e_{j-1}$, forms a path from vertex $\pi(i)$ to vertex $\pi(j)$.
 
 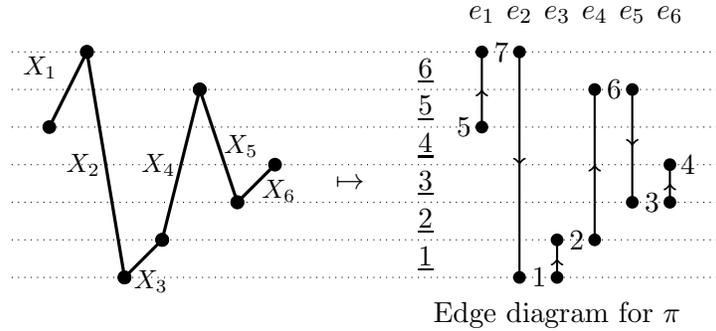
\begin{figure}[htb]
	\centering
		\begin{tikzpicture}[scale=.5]
			\foreach \x in {1,2,...,7}
			\draw[dotted] (0,\x)--(19,\x);
			\draw[very thick,*--*] (1,5)--(1.5,6) node[above left] {\small $X_1$}--(2,7);
			\draw[very thick, *--*] (2,7)--(2.5,4) node[xshift=-.3cm] {\small $X_2$}--(3,1);
			\draw[very thick, *--*] (3,1)--(3.5,1.5) node[yshift=-.3cm, xshift=.1cm] {\small $X_3$}--(4,2);
			\draw[very thick, *--*] (4,2)--(4.5,4) node[xshift=-.3cm]{\small $X_4$}--(5,6);
			\draw[very thick, *--*] (5,6)--(5.5,4.5) node[xshift=.3cm]{\small $X_5$}--(6,3);
			\draw[very thick, *--*] (6,3)--(6.5,3.5) node[xshift=.3cm,yshift=-.1cm]{\small $X_6$}--(7,4);
%			\draw (3.5,0) node {Permutation};

			\draw(9,3.5) node{$\mapsto$};

			%\draw (14.5,1.5) node {$L_1=\{e_2,e_3\}$};
			%\draw (14.5,2.5) node {$L_2=\{e_2,e_4\}$};
			%\draw (14.5,3.5) node {$L_3=\{e_2,e_4,e_5,e_6\}$};
			%\draw (14.5,4.5) node {$L_4=\{e_2,e_4,e_5\}$};
			%\draw (14.5,5.5) node {$L_5=\{e_1,e_2,e_4,e_5\}$};
			%\draw (14.5,6.5) node {$L_6=\{e_1,e_2\}$};
			%\draw (14.5,0) node {Levels ($\LL_{\pi}$)};

			\draw[thick,*->-*](12.5,5) node[left]{5}--(12.5,7);
			\draw[thick,*->-*](13.5,7) node[left]{7}--(13.5,1);
			\draw[thick,*->-*](14.5,1) node[left]{1}--(14.5,2);
			\draw[thick,*->-*](15.5,2) node[left]{2}--(15.5,6);
			\draw[thick,*->-*](16.5,6) node[left]{6}--(16.5,3);
			\draw[thick,*->-*](17.5,3) node[left]{3}--(17.5,4) node[right]{4};
			{\foreach \x in {1,2,...,6}
			\draw (11.5+\x,8) node{$e_{\x}$};}
			{\foreach \x in {1,2,...,6}
			\draw (11,0.5+\x) node{$\ubar{\x}$};
			}
			\draw (14.5,0) node {Edge diagram for $\pi$};
			
		\end{tikzpicture}
		\caption{The permutation $\pi=5712634$ as an edge diagram.}
		\label{Level Ex}
 \end{figure}
 
We partition the $y$-axis of our edge diagram into the intervals $\ubar{1}=[1,2]$, $\ubar{2}=[2,3]$, \ldots, $\ubar{n-1}=[n-1,n]$, that we call \emph{levels}.  We can then write each edge as a formal sum of the levels it covers: $e_i=\sum_{j=\pi(i)} ^{\pi(i+1)-1} \ubar{j}$ when $\pi(i)<\pi(i+1)$ and $e_i=-\sum_{j=\pi(i+1)} ^{\pi(i)-1} \ubar{j}$ when $\pi(i+1)<\pi(i)$.  Notice that this definition is equivalent to setting $e_i=\sum_{j=1}^{n-1} (L(\pi))_{ij} \cdot \ubar{j}$, that is,
\begin{equation} \label{edges-levels} \left[ \begin{array}{c} e_1 \\ e_2 \\ \vdots \\ e_{n-1} \end{array} \right] = L(\pi) \cdot \left[ \begin{array}{c} \ubar{1} \\ \ubar{2} \\ \vdots \\ \ubar{n-1} \end{array} \right]. \end{equation}
 For ease of notation, we write $\ubar{j} \in e_i$ if $(L(\pi))_{ij}\neq 0$.
We will use the notation $e_i^{\pi}$ if the permutation needs to be specified.
%Since the set of levels, $\ubar{1},\ubar{2},\ldots,\ubar{n-1}$ have a consistent interpretation for all edge diagrams, there is no need to distinguish them.

\begin{proof}[Proof of Lemma~\ref{rep}]
By Equation~\eqref{edges-levels}, the rows of $L(\pi)$ express the edges $e^\pi_i$ in terms of the levels~$\ubar{j}$ as $e^\pi_i=\sum_{j=\pi(i)} ^{\pi(i+1)-1} \ubar{j}$ if $\pi(i)<\pi(i+1)$ or $e^\pi_i=-\sum_{j=\pi(i+1)} ^{\pi(i)-1} \ubar{j}$ if $\pi(i+1)<\pi(i)$, and similarly for $L(\tau)$. Thus, the $i$-th row of the product $L(\pi)L(\tau)$ gives the expression of $\sum_{j=\pi(i)} ^{\pi(i+1)-1} e^\tau_j$ 
or $-\sum_{j=\pi(i+1)} ^{\pi(i)-1} e^\tau_j$ in terms of the levels.

On the other hand, note that the edges in the sums $\sum_{j=\pi(i)}^{\pi(i+1)-1} e^\tau_j$ and $-\sum_{j=\pi(i+1)} ^{\pi(i)-1} e^\tau_j$ form a path connecting $\tau(\pi(i))$ and $\tau(\pi(i+1))$ in the edge diagram of $\tau$. It follows that these sums equal $\sum_{k=\tau(\pi(i))}^
{\tau(\pi(i+1))-1} \ubar{k}$ or $-\sum_{k=\tau(\pi(i+1))}^{\tau(\pi(i))-1} \ubar{k}$, depending on the sign of $\tau(\pi(i))-\tau(\pi(i+1))$. The coefficients of these sums are precisely the $i$-th row of $L(\tau\pi)$ by definition, so we have shown that the $i$-th rows of $L(\pi)L(\tau)$ and $L(\tau\pi)$ are equal.
\omitt{
We describe a method to derive $L(\tau \pi)$ from $L(\pi)$ and $L(\tau)$ in the setting of levels and edges; we then show this is equivalent to matrix multiplication.

%Let $m=\min(\pi(i), \pi(i+1))$ and $M=\max(\pi(i),\pi(i+1))$.  

First, we show that $e_i^{\tau \pi}=\sgn(e_i^{\pi}) \cdot \sum_{\ubar{t} \in e_i^{\pi}} e_t^{\tau}$.  First, notice that $\{t \mid \ubar{t} \in e_i^{\pi} \}=\{\pi(i), \pi(i)+1, \ldots, \pi(i+1)-1\}$ if $\pi(i)<\pi(i+1)$ and $\{\pi(i), \pi(i)+1, \ldots, \pi(i+1)-1\}$ if $\pi(i+1)<\pi(i)$.  So, the edges $\{e_t^{\tau} \mid \ubar{t} \in e_i^{\pi} \}$ form a path in the edge diagram for $\tau$ from vertex $\tau(\pi(i))$ to $\tau(\pi(i+1))$ or vice-versa.  It follows that $\sgn(e_i^{\pi}) \cdot \sum_{\ubar{t} \in e_i^{\pi}} e_t^{\tau}=\sgn(e_i^{\pi})( \ubar{\tau\pi(i)}+\ubar{\tau\pi(i)+1}+\ldots+\ubar{\tau\pi(i+1)})=\sgn(e_i^{\pi}) \cdot e_i^{\tau\pi}$ when $\tau\pi(i)<\tau\pi(i+1)$ or $\sgn(e_i^{\pi}) \cdot \sum_{\ubar{t} \in e_i^{\pi}} e_t^{\tau}=\sgn(e_i^{\pi})( \ubar{\tau\pi(i+1)}+\ubar{\tau\pi(i+1)+1}+\ldots+\ubar{\tau\pi(i)})=\sgn(e_i^{\pi}) \cdot e_i^{\tau\pi}$ when $\tau\pi(i)>\tau\pi(i+1)$.  Therefore, $e_i^{\tau \pi}=\sgn(e_i^{\pi}) \cdot \sum_{\ubar{t} \in e_i^{\pi}} e_t^{\tau}$.  Additionally, $\sgn(e_i^{\tau\pi}) \cdot \delta(\ubar{j} \in e_i^{\tau\pi} )=\sgn(e_i^{\pi}) \cdot \sum_{\ubar{t} \in e_i^{\pi}} \sgn(e_t^{\tau}) \cdot \delta(\ubar{j} \in e_t^{\tau})$.

Now we can make the following argument:
\begin{align*}
L(\tau \pi)_{ij} & =\sgn(e_i^{\tau\pi}) \cdot \delta(\ubar{j} \in e_i^{\tau\pi} ) \\
& = \sgn(e_i^{\pi}) \cdot \sum_{\ubar{t} \in e_i^{\pi}} \sgn(e_t^{\tau}) \cdot \delta(\ubar{j} \in e_t^{\tau}) \\
 & = \sum_{k=1}^{n-1} \sgn(e_i^{\pi}) \delta(\ubar{k} \in e_i^{\pi}) \cdot \sgn(e_k^{\tau}) \delta(\ubar{j} \in e_k^{\tau}) \\
  & = \sum_{k=1}^{n-1} (L(\pi))_{ik} (L(\tau))_{kj} \\
  & = (L(\pi)L(\tau))_{ij}. \qedhere
  \end{align*}}
\end{proof}

In order to classify equivalence classes, we first show that $\pi \sim \tau$ if and only if $L(\pi)$ and $L(\tau)$ are related by permutations of rows and columns.  In the following lemma, $P_{\rho}$ denotes the permutation matrix associated to $\rho$; that is, $(P_{\rho})_{ij}$ equals $1$ if $\rho(i)=j$ and $0$ otherwise.

\begin{lem} \label{permutation}
For $\pi, \tau \in S_n$, $\pi \sim \tau$ if and only if there exist $\sigma, \rho \in S_{n-1}$ such that $P_{\rho^{-1}} L(\pi) P_{\sigma}=L(\tau)$.
\end{lem}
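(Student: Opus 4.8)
The plan is to establish both directions by translating the permutation-of-variables relation in the definition of $\sim$ into the language of the edge diagrams, and then reading off the matrix statement from Equation~\eqref{edges-levels}.

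\textbf{Setting up the dictionary.} First I would unwind what $p(x_1,\dots,x_{n-1})=\pi$ actually says about the $x_i$'s. Recording the walk $Z_0,\dots,Z_{n-1}$ with $Z_k=x_1+\cdots+x_k$, the condition $p=\pi$ is equivalent to the statement that for each $i$, the level $\ubar{j}$ lies strictly between $Z_{i-1}$ and $Z_i$ precisely when $\pi(i)\le j<\pi(i+1)$ or $\pi(i+1)\le j<\pi(i)$ — i.e. the row $(L(\pi))_{i\bullet}$ records, with signs, which of the $n-1$ gaps between consecutive order statistics of $\{Z_0,\dots,Z_{n-1}\}$ the step $x_i=Z_i-Z_{i-1}$ crosses and in which direction. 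Introducing the ``gap variables'' $y_j\ge 0$ equal to the width of the $j$th gap (in increasing order of the $Z_k$'s), this says exactly $x_i=\sum_j (L(\pi))_{ij}\,y_j$, and conversely: $p(x_1,\dots,x_{n-1})=\pi$ holds iff there exist $y_1,\dots,y_{n-1}>0$ with $x_i=\sum_j (L(\pi))_{ij}y_j$ for all $i$. This is the key reformulation; it is essentially a restatement of Equation~\eqref{edges-levels} at the level of real numbers rather than formal sums.

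\textbf{The forward direction.} Suppose $\pi\sim\tau$, witnessed by $\rho\in S_{n-1}$ with $p(x_1,\dots,x_{n-1})=\pi \iff p(x_{\rho(1)},\dots,x_{\rho(n-1)})=\tau$ for all real $x_i$. Using the reformulation above, whenever $(x_i)$ is in the (open, full-dimensional) cone $C_\pi=\{x : x_i=\sum_j (L(\pi))_{ij}y_j,\ y_j>0\}$, the permuted vector $(x_{\rho(i)})_i = P_\rho\,x$ lies in $C_\tau$, and vice versa; hence $P_\rho C_\pi = C_\tau$ as subsets of $\RR^{n-1}$. Now $C_\pi = L(\pi)^{T}(\RR_{>0}^{n-1})$ and $C_\tau = L(\tau)^{T}(\RR_{>0}^{n-1})$ (transposes because the rows of $L(\pi)$ give the $x_i$ in terms of the $y_j$), so $P_\rho L(\pi)^{T}$ and $L(\tau)^{T}$ map the open positive orthant onto the same cone. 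A linear map sending the open positive orthant onto the open positive orthant must be a signed permutation matrix, and since our matrices have determinant $\pm1$ and integer entries the map $L(\tau)^{-T} P_\rho L(\pi)^{T}$ taking one orthant-image to the other is in fact a \emph{permutation} matrix $P_\sigma^{T}$ with no sign changes (the images of the extreme rays — the positive coordinate axes — must be matched up positively because both $L(\pi), L(\tau)$ are invertible over $\ZZ$, forcing the primitive generators to correspond). Transposing $P_\rho L(\pi)^{T} = L(\tau)^{T} P_\sigma^{T}$ gives $P_\sigma L(\pi) P_{\rho}^{T} = L(\tau)$; renaming and using $P_\rho^{T}=P_{\rho^{-1}}$ yields a relation of the stated form $P_{\rho^{-1}} L(\pi) P_\sigma = L(\tau)$.

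\textbf{The reverse direction.} Conversely, given $\sigma,\rho$ with $P_{\rho^{-1}}L(\pi)P_\sigma = L(\tau)$, I would run the same dictionary backwards: for any real $x_1,\dots,x_{n-1}$, having $p(x_1,\dots,x_{n-1})=\pi$ means $x = L(\pi)^{T} y$ for some $y>0$; then $P_\sigma^{T} x = P_\sigma^{T} L(\pi)^{T} y = (L(\pi)P_\sigma)^{T} y = (P_\rho L(\tau))^{T} y = L(\tau)^{T}(P_\rho y)$, and $P_\rho y>0$ since $P_\rho$ just permutes the positive coordinates; hence $p$ of the vector $P_\sigma^{T} x = (x_{\sigma(1)},\dots,x_{\sigma(n-1)})$ equals $\tau$. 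The same computation in reverse gives the converse implication, so $\pi\sim\tau$ with witness $\sigma$. \emph{Caveat:} the definition of $\sim$ requires the equivalence to hold for \emph{every} real tuple, including degenerate ones where the walk repeats a value and $p$ is undefined on both sides; one checks that $x = L(\pi)^{T}y$ with some $y_j = 0$ corresponds exactly to a repeated value of the walk, and this degeneracy is preserved by $P_\sigma^{T}$ since it carries $L(\pi)^{T}(\partial\RR_{\ge0}^{n-1})$ to $L(\tau)^{T}(\partial\RR_{\ge0}^{n-1})$, so $p$ is undefined on one side iff undefined on the other.

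\textbf{Main obstacle.} The subtle point is the claim in the forward direction that a linear isomorphism carrying the open positive orthant onto the open positive orthant is forced to be a \emph{sign-free} permutation matrix once we know the matrices involved are unimodular over $\ZZ$ — i.e. ruling out that $P_\rho L(\pi)^{T}$ and $L(\tau)^{T}$ differ by a diagonal sign matrix as well as a permutation. This needs the extra input that the columns of $L(\pi)$ and $L(\tau)$ (the images of the $\ubar{j}$'s, equivalently the coordinate rays) are primitive integer vectors, so matching the rational cones forces matching the lattice generators with positive coefficients; I'd isolate this as a short linear-algebra lemma. The rest is bookkeeping with transposes and the identity $P_\rho^{T}=P_{\rho^{-1}}$.
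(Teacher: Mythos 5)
Your proof is correct and follows essentially the same route as the paper's: both identify the set $D_\pi=\{\xx : p(\xx)=\pi\}$ with the image of the open positive orthant under $L(\pi)$ (the gap-variable dictionary) and then argue that the induced unimodular self-map of the orthant must be a permutation matrix — the paper phrases this last step via monoid automorphisms of the nonnegative integer orthant rather than via extreme rays and primitive generators, but it is the same fact. Two cosmetic slips that do not affect the argument: since the rows of $L(\pi)$ express the $x_i$ in terms of the gaps $y_j$, the dictionary is $x=L(\pi)y$ with no transpose (the discrepancy washes out after renaming $\sigma,\rho$), and an invertible linear map carrying the open positive orthant onto itself is a \emph{positive monomial} matrix rather than a ``signed permutation matrix'' — what your primitivity/unimodularity observation correctly rules out is a positive diagonal scaling, since sign changes are already excluded by orthant preservation.
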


The proof of this lemma uses the fact that if we let $D_{\pi}=\{\xx \in \RR^{n-1} \mid p(\xx)=\pi \}$, then multiplication by $L(\pi)$ is a bijection between $\RR^{n-1}_{>0}$ and $D_{\pi}$.
%since $\xx\in D_\pi$ if and only if one can assign positive values to the heights of the levels so that equation~\eqref{edges-levels} expresses each $x_i$ as the sum of the heights of the levels that the correponding edge $e_i$ spans.
Thus, if $\rho\in S_{n-1}$ is such that $(x_1,x_2,\ldots,x_{n-1}) \in D_{\pi}$ if and only if $(x_{\rho(1)},x_{\rho(2)}, \ldots,x_{\rho(n-1)}) \in D_{\tau}$, then multiplication by
$L(\pi^{-1}) P_{\rho} L(\tau)$ is a bijection from $\RR^{n-1}_{>0}$ to itself. Since this matrix and its inverse have non-negative integer entries, it must be a permutation matrix $P_\sigma$.

% We have expressed each $e_i$ as a sum of $\ubar{1}, \ubar{2}, \ldots, \ubar{n-1}$.  Given $(x_1,x_2,\ldots,x_{n-1}) \in \RR^{n-1}_{>0}$, we make the substitutions $\ubar{1}=x_1, \ubar{2}=x_2, \ldots, \ubar{n-1}=x_{n-1}$ so that each $e_i$ is an element of $\RR$ with $(e_1,e_2,\ldots,e_{n-1}) \in D_{\pi}$.  Notice that $ L(\pi)( x_1 , x_2 ,\cdots ,x_{n-1} )=(e_1, e_2, \cdots, e_{n-1})$.  In fact, by the definition of levels, it follows that $L(\pi):\RR^{n-1}_{>0} \longrightarrow D_{\pi}$ is surjective.  Since $L(\pi)$ is invertible, we know that $L(\pi)$ is a bijection.

\omitt{
\begin{proof}
($\Longrightarrow$) Since $\pi \sim \tau$, there exists some $\rho \in S_{n-1}$ such that $(x_1,x_2,\ldots,x_{n-1}) \in D_{\pi}$ if and only if $(x_{\rho(1)},x_{\rho(2)}, \ldots,x_{\rho(n-1)}) \in D_{\tau}$.  We will show that $L(\pi)^{-1} P_{\rho} L(\tau)$ is a permutation matrix.  We know that $L(\tau):\RR^{n-1}_{>0} \longrightarrow D_{\tau}$ and $L(\pi)^{-1}:D_{\pi} \longrightarrow \RR^{n-1}_{>0}$ are bijections.  Additionally, since $\pi \sim \tau$, we know that $P_{\rho}:D(\tau) \longrightarrow D(\pi)$ is a bijection and it follows that $L(\pi)^{-1} P_{\rho}L(\tau):\RR^{n-1}_{>0} \longrightarrow \RR^{n-1}_{>0} $ is a bijective.  Equipped with the addition operation, $\RR^{n-1}_{>0}$ is a monoid and $L(\pi)^{-1} P_{\rho}L(\tau)$ is a monoid automorphism.

Set $A=L(\pi)^{-1} P_{\rho} L(\tau)$.  There are a few things to note:

\begin{enumerate}
\item The entries of $A$ must be integers.  Since $L$ is a homomorphism, $L(\pi)^{-1}=L(\pi^{-1})$. Therefore, $L(\pi^{-1})$, $P_{\rho}$, $L(\tau)$ all have integers entries, so $A$ must as well.

\item The entries of $A$ are nonnegative.  Assume this is not the case; we will show that there is some choice for $(x_1,x_2,\ldots,x_{n-1}) \in \RR^{n-1}_{>0}$ so that $A(x_1,x_2,\ldots,x_{n-1})=(y_1,y_2,\ldots,y_{n-1}) \notin \RR^{n-1}_{>0}$.  Let $i,j$ be indices such that $A_{ij}<0$.  Let $\epsilon >0$.  Set $x_j=(n-1)\cdot\epsilon$ and $x_t = \epsilon$ for $t \neq j$.  Then $y_i=\sum_{t=1}^{n-1} A_{it}x_t<(n-2)\cdot \epsilon-(n-1)\cdot\epsilon<0$ which cannot happen.  
\end{enumerate}

Since $A^{-1}=(L(\pi)^{-1} P_{\rho} L(\tau))^{-1}=L(\tau)^{-1} P_{\rho^{-1}} L(\pi)$, we can use the same argument to show that $A^{-1}$ must have nonnegative, integer entries as well.  

The fact that all entries of $A$ are positive and integral tells us that the restriction of $A$ to $\ZZ_{>0}^{n-1}$, $A|_{\ZZ_{>0}^{n-1}}:\ZZ_{>0}^{n-1} \longrightarrow \ZZ_{>0}^{n-1}$ is a monoid automorphism.  We know that the elements $(0,0,\ldots,0,1,0,\ldots,0) \in \ZZ_{>0}^{n-1}$, with a one in the $i$th position, cannot be written as a sum of any other elements in $\ZZ_{>0}^{n-1}$.  Therefore, $A$ must permute these elements.  It follows that $A$, and consequently $A^{-1}$, are permutation matrices.  Let $A=L(\pi^{-1})P_{\rho}L(\tau)=P_{\sigma}$.  Then $P_{\rho^{-1}}L(\pi)P_{\sigma}=L(\tau)$.

($\Longleftarrow$) Assume there exists some $\sigma, \rho \in S_{n-1}$ such that $P_{\rho^{-1}} L(\pi) P_{\sigma}=L(\tau)$.  Take $(x_1,x_2,\ldots,x_{n-1}) \in D_{\pi}$.  Notice that $L(\tau) P_{\sigma^{-1}} L(\pi)^{-1}:D_{\pi} \longrightarrow D_{\tau}$ is a bijection.  Additionally, $L(\tau) P_{\sigma^{-1}} L(\pi)^{-1}=(P_{\rho^{-1}} L(\pi) P_{\sigma}) P_{\sigma^{-1}} L(\pi)^{-1}=P_{\rho^{-1}}$.  Notice that $P_{\rho^{-1}}(x_1,x_2,\ldots,x_{n-1})=(x_{\rho(1)},x_{\rho(2)},\ldots,x_{\rho(n-1)}) \in D_{\tau}$.

A similar argument is used to show that $(x_1,x_2,\ldots,x_{n-1}) \in D_{\tau}$ implies $(x_{\rho^{-1}(1)}, x_{\rho^{-1}(2)},\ldots,x_{\rho^{-1}(n-1)}) \in D_{\pi}$.
\end{proof}}

If we consider the results of Lemma~\ref{permutation} in the context of an edge diagram, $\sigma$ is applied to the levels and $\rho$ is applied to the edges.  In an edge diagram, we will only need to consider permutations of the levels by $\sigma$, as will be stated in Lemma~\ref{level perm}.
 
We need a notation that describes edges in terms of levels.  For vertices $s,t \in [n]$ with $s<t$ in an edge diagram, we write the \emph{interval} between $s$ and $t$ as $[s,t]$ and define this to be the union of levels $\ubar{s} \cup \ubar{s+1} \cup \ldots \cup \ubar{t-1}$.  We write an edge $e_i \in \EE_\pi$ as a directed interval $[\pi(i),\pi(i+1)]_+$ (if $\pi(i)<\pi(i+1)$) or $[\pi(i+1),\pi(i)]_-$ (if $\pi(i)>\pi(i+1)$) for $1\le i\le n-1$; the subscripts indicate upwards or downwards direction, respectively.  For example, in Figure \ref{Level Ex}, the edges in $\EE_{\pi}$ are $e_1=[5,7]_+,e_2=[1,7]_-, e_3=[1,2]_+,e_4=[2,6]_+, e_5=[6,3]_-, e_6=[3,4]_+$.  
Given an edge $e=[i,j]_{\pm} \in \EE_{\pi}$ and any interval $[s,t]$, we write $[s,t] \subseteq e$ if $[s,t] \subseteq [i,j]$ and $e \subseteq [s,t]$ if $[i,j] \subseteq [s,t]$.

Permuting the levels of an edge diagram by $\sigma$ takes level $\ubar{i}$ and moves it to height $\ubar{\sigma(i)}$.  Intervals and edges are shifted accordingly: the interval $[i,j]$ is moved to $\sigma.[i,j]=\ubar{\sigma(i)} \cup \ubar{\sigma(i+1)} \cup \ldots \cup \ubar{\sigma(j-1)}$ (note that $\sigma.[i,j]$ may no longer be an interval), and the edge $[i,j]_{\pm}$ is moved to $(\sigma.[i,j])_{\pm}$, where the sign is preserved.  The set of images of elements of $\EE_{\pi}$ is denoted by
%\todo{they are not necessarily edges, just sets of levels with a sign, so according to our definition, $\EE_{\sigma.\pi}$ is not an edge diagram in general} 
$\EE_{\sigma.\pi}$, and we say that this is a \emph{well-defined} edge diagram if for every edge $[i,j]_{\pm} \in E_{\pi}$, $\sigma.[i,j]$ is an interval.  If $\EE_{\sigma.\pi}$ is the edge diagram of a permutation, we say that $\EE_{\sigma.\pi}$ is a \emph{proper} edge diagram and call the corresponding permutation $\sigma.\pi$.

Recall that the edge diagram of a permutation $\tau$ forms a path $\tau(1), \tau(2), \ldots, \tau(n)$ where the vertices $\tau(i)$ and $\tau(i+1)$ are connected by an edge $e_i$. A well-defined edge diagram $\EE_{\sigma.\pi}$ is proper if and only if its edges form a path.  In general, we do not consider the edges of the set $\EE_{\pi}$ ordered.  This idea is useful when proving the following statement:
 
\begin{lem} \label{level perm}
Given $\pi, \tau  \in S_n$, $\pi \sim \tau$ if and only if there exists $\sigma \in S_{n-1}$ such that $\EE_{\sigma.\pi}=\EE_{\tau}$.
\end{lem}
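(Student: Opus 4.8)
The plan is to derive this directly from Lemma~\ref{permutation}, whose matrix condition $P_{\rho^{-1}} L(\pi) P_{\sigma}=L(\tau)$ already separates the permutation $\sigma$ of columns (levels) from the permutation $\rho$ of rows (edges). The key observation is that $\EE_{\pi}$ is an \emph{unordered} set of directed intervals, so reordering the rows of $L(\pi)$ by $\rho$ is invisible at the level of edge diagrams. Concretely, I would first record the dictionary supplied by Equation~\eqref{edges-levels}: the $i$-th row of $L(\pi)P_{\sigma}$ is exactly the level-coordinate vector of $\sigma.(e^{\pi}_i)$, so the multiset of rows of $L(\pi)P_{\sigma}$ is $\{\,\sigma.(e^{\pi}_i)\mid 1\le i\le n-1\,\}$ whenever this is a well-defined edge diagram. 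I would also note that the $n-1$ edges of a permutation are pairwise distinct directed intervals (since $\pi$ is a bijection, $e^{\pi}_i=[\pi(i),\pi(i+1)]_{\pm}$ determines $i$), so ``set of rows'' is unambiguous and has exactly $n-1$ elements.

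For the forward direction, suppose $\pi\sim\tau$. By Lemma~\ref{permutation} choose $\sigma,\rho\in S_{n-1}$ with $P_{\rho^{-1}}L(\pi)P_{\sigma}=L(\tau)$, equivalently $L(\pi)P_{\sigma}=P_{\rho}L(\tau)$. Every row of $L(\tau)$ is a directed interval by the definition of $L$, and $P_{\rho}$ only permutes these rows, so every row of $L(\pi)P_{\sigma}$ is a directed interval; hence each $\sigma.(e^{\pi}_i)$ is an interval and $\EE_{\sigma.\pi}$ is a well-defined edge diagram. Its edge set equals the set of rows of $L(\pi)P_{\sigma}$, which equals the set of rows of $L(\tau)$, namely $\EE_{\tau}$; thus $\EE_{\sigma.\pi}=\EE_{\tau}$.

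For the converse, suppose $\sigma\in S_{n-1}$ satisfies $\EE_{\sigma.\pi}=\EE_{\tau}$ (so in particular $\EE_{\sigma.\pi}$ is well-defined). Then the set of rows of $L(\pi)P_{\sigma}$ coincides with the set of rows of $L(\tau)$; since both matrices have $n-1$ rows and their rows are pairwise distinct, there is a permutation $\rho\in S_{n-1}$ reordering the rows of $L(\pi)P_{\sigma}$ into those of $L(\tau)$, i.e.\ $P_{\rho^{-1}}L(\pi)P_{\sigma}=L(\tau)$. Lemma~\ref{permutation} then yields $\pi\sim\tau$.

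I expect the only delicate point to be bookkeeping rather than any new idea: keeping the $\sigma$ versus $\sigma^{-1}$ conventions consistent between \eqref{edges-levels} and the definition of $\sigma.[i,j]$, and reading ``$\EE_{\sigma.\pi}=\EE_{\tau}$'' as an equality of sets of directed intervals (which automatically forces $\EE_{\sigma.\pi}$ to be well-defined, since $\EE_{\tau}$ consists of genuine intervals). Everything substantive is already contained in Lemma~\ref{permutation}.
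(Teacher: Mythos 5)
Your proposal is correct and follows essentially the same route as the paper: both directions reduce to Lemma~\ref{permutation} by observing that the rows of $L(\pi)P_{\sigma}$ are the level-coordinate vectors of the images $\sigma.(e^{\pi}_i)$ and that $\EE_{\pi}$ is an unordered set, so a row permutation $P_{\rho}$ is exactly what absorbs the difference between $L(\pi)P_{\sigma}$ and $L(\tau)$. Your added remark that the $n-1$ edges are pairwise distinct directed intervals is a harmless (and correct) tightening of the bookkeeping the paper leaves implicit.
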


\omitt{
\begin{proof}
First assume $\pi \sim \tau$.  Then there exist some $\sigma, \rho \in S_{n-1}$ such that $P_{\rho^{-1}} L(\pi) P_{\sigma}=L(\tau)$.  So $L(\pi)P_{\sigma}$ is a matrix whose rows are those of $L(\tau)$ in a different order.  Consider $\EE_{\sigma.\pi}$.   Recall that the rows of $L(\pi)$ and $L(\tau)$ correspond to edges of the edge diagram, while the columns correspond to the levels.  Since edges in edges diagrams are not ordered, it follows that the set $\EE_{\sigma.\pi}$ and $\EE_{\tau}$ are equal.

Now assume $\EE_{\sigma.\pi}=\EE_{\tau}$.  Consider a labeling of the edges of $\EE_{\pi}$ where $e_i^{\pi}=[\pi(i),\pi(i+1)]_+$ or $[\pi(i+1),\pi(i)]_-$.  Then there exists some permutation $\rho \in S_{n-1}$ such that $\sigma.e_i^{\pi}=e_{\rho(i)}^{\tau}$ for every $1 \leq i \leq n-1$.  This implies that $ L(\pi) P_{\sigma}=P_{\rho}L(\tau)$ and therefore $\pi \sim \tau$.
\end{proof}}

 \begin{example}
Let $\pi=54621873,\tau=73218463\in S_8$, whose edge diagrams are drawn in Figure~\ref{actionex}, and let $\sigma = 2365471\in S_7$. Then $\EE_{\pi}=\{[4,5]_-,[4,6]_+,[2,6]_-,[1,2]_-,[1,8]_+,[7,8]_-,[3,7]_-\}$ and $\EE_{\sigma.\pi} = \{[3,7]_-,[2,3]_-,[1,2]_-,[1,8]_+,[4,8]_-,[4,6]_+,[6,5]_- \}=\EE_{\tau}$.  Therefore $\pi \sim \tau$.
 \end{example}
 
%===================================================================
	
	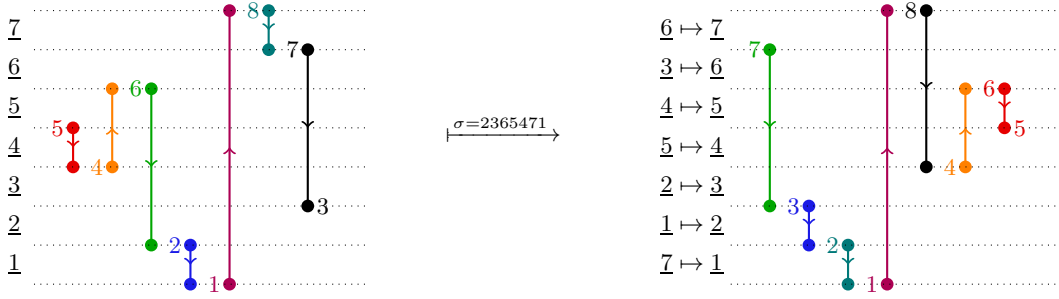
\begin{figure}[h]
\footnotesize\centering
	\begin{tikzpicture} [scale=.52]
	
		\draw[thick,*->-*, black!10!red](1,5) node[left]{5}--(1,4);
		\draw[thick,*->-*, orange](2,4) node[left]{4}--(2,6);
		\draw[thick,*->-*, black!35!green](3,6) node[left]{6}--(3,2);
		\draw[thick,*->-*, gray!20!blue](4,2) node[left]{2}--(4,1);
		\draw[thick,*->-*, blue!10!purple](5,1) node[left]{1}--(5,8);
		\draw[thick,*->-*, black!5!teal](6,8) node[left]{8}--(6,7);
		\draw[thick,*->-*](7,7) node[left]{7}--(7,3)node[right]{3};
		{\foreach \x in {1,2,...,8}
		\draw[dotted] (0,\x)--(8.5,\x);}
		{\foreach \x in {1,2,...,7}
		\draw (-.5,\x+.5) node{$\ubar{\x}$};}

		\draw (12,5) node{$\xmapsto{\sigma=2365471}$};				
	\end{tikzpicture} \hskip 10mm
	\begin{tikzpicture} [scale=.52]
	
		\draw[thick,*->-*,black!35!green](1,7) node[left]{7}--(1,3);
		\draw[thick,*->-*,gray!20!blue](2,3) node[left]{3}--(2,2);
		\draw[thick,*->-*,black!5!teal](3,2) node[left]{2}--(3,1);
		\draw[thick,*->-*, blue!10!purple](4,1) node[left]{1}--(4,8);
		\draw[thick,*->-*](5,8) node[left]{8}--(5,4);
		\draw[thick,*->-*,orange](6,4) node[left]{4}--(6,6);
		\draw[thick,*->-*,black!10!red](7,6) node[left]{6}--(7,5)node[right]{5};
		{\foreach \x in {1,2,...,8}
		\draw[dotted] (0,\x)--(8.5,\x);}

		\draw (-1,2.5) node{$\ubar{1}\mapsto \ubar{2}$};
		\draw (-1,3.5) node{$\ubar{2}\mapsto \ubar{3}$};
		\draw (-1,6.5) node{$\ubar{3}\mapsto \ubar{6}$};
		\draw (-1,5.5) node{$\ubar{4}\mapsto \ubar{5}$};
		\draw (-1,4.5) node{$\ubar{5}\mapsto \ubar{4}$};
		\draw (-1,7.5) node{$\ubar{6}\mapsto \ubar{7}$};
		\draw (-1,1.5) node{$\ubar{7} \mapsto \ubar{1}$};
						
	\end{tikzpicture}
	\caption{$\pi=54621873$ is mapped to $\tau=73218463$ by applying $\sigma=2365471$. The edge $[3,7]_-$ is mapped to $\sigma.[3,7]_-=[4,8]_-$. } \label{actionex}
\end{figure}
%==================================================================

\begin{definition}
Let $\pi \in S_n$ and $\sigma \in S_{n-1}$ such that $\EE_{\sigma.\pi}$ is well-defined.  We say that \emph{$\sigma$ is valid with respect to $\pi$} if $\EE_{\sigma.\pi}$ is proper.  If $\sigma$ is valid with respect to $\pi$, we will say that $\sigma$ acts validly on~$\pi$.
\end{definition}

One of the goals of the next few sections to describe all $\sigma$ that are valid with respect to $\pi$ and understand how they transform~$\pi$.

%We have defined an operation by which a permutation $\sigma\in S_{n-1}$ transforms a permutation $\pi\in S_n$.  
Note that if $\sigma_1, \sigma_2 \in S_{n-1}$ are such that $\EE_{\sigma_1.\pi}$ and $\EE_{\sigma_2.(\sigma_1.\pi)}$ are proper edge diagrams, then $\EE_{\sigma_2.(\sigma_1.\pi)}=\EE_{(\sigma_2 \sigma_1).\pi}$. Allowing edge diagrams that are not well-defined (meaning the edges are not contiguous) would give a group action of $S_{n-1}$ on $S_n$, but for our purposes it is convenient to restrict only to well-defined diagrams.

%=========================================================
\section{Valid Flips} \label{Valid Flips}
%=========================================================

In this section, we define an operation on edge diagrams that is analogous to a flip of a bordered cylindrical block.  The remaining sections will then focus on proving that any two equivalent permutations differ by a sequence of these operations.

 We will use the following two properties of edge diagrams:

\begin{enumerate}[(1)]
\item  Let $\{p,q\} =\{\pi(1), \pi(n)\}$ be the endpoints in the edge diagram for $\pi$ (assume $p<q$).  Then for any $i \in [n-1]$, the number of edges that contain $\ubar{i}$ is even if $\ubar{i} \in [p,q]$ and odd otherwise.  So, if $\sigma$ is valid with respect to $\pi$, then $\sigma.[p,q]$ must be an interval.

\item Define a cycle in an edge diagram to be a sequence of vertices $v_1,v_2,v_3, \ldots, v_k$ where either $[v_i,v_{i+1}]_+$ or $[v_{i+1},v_i]_-$ is an edge for every $i$, and either $[v_k,v_{1}]_+$ or $[v_{1},v_k]_-$ is an edge.  If $\EE_{\sigma.\pi}$ is well-defined, then it does not contain a cycle.
\end{enumerate}

Property (1) is straightforward. To prove property (2), one can show that a cycle in $\EE_{\sigma.\pi}$ would require the existence of a cycle in $\EE_\pi$.

Define a \emph{flip} to be a permutation $\sigma \in S_{n-1}$ of the form $$\sigma =\bigl(\begin{smallmatrix} 1 & 2 & 3 & \cdots & i-1 & i & i+1 & \cdots & j-1 & j  & \cdots & n-1 \\ 1 & 2 & 3 & \cdots & i-1 & j-1 & j-2 & \cdots &  i & j & \cdots & n \end{smallmatrix} \bigr).$$  We say that $\sigma$ flips the interval $[i,j]$.

Since adjacent transpositions are a particular case of flips, it is clear that flips generate $S_{n-1}$.  Additionally, property (1) allows us to make a stronger statement.  Fix $\pi$, and let $F([p,q]) \subseteq S_{n-1}$ be the set of flips of intervals $[i,j]$ such that $[i,j] \subseteq [p,q]$, $[p,q] \subseteq [i,j]$, or $[p,q] \cap [i,j] = \emptyset$.  Then, for any $\sigma$ that is valid with respect to $\pi$, there exists a sequence of flips $\sigma_1,\sigma_2,\ldots,\sigma_{k}$ such that $\sigma_k \sigma_{k-1} \ldots \sigma_1 = \sigma$ and $\sigma_i \in F( \sigma_{i-1} \sigma_{i-2} \ldots \sigma_1.[p,q])$ for every $1 \leq i \leq k$.  Note that we are not claiming that $\EE_{\sigma_{i-1} \sigma_{i-2} \ldots \sigma_1.\pi}$ is proper or even well-defined for any $i$. 
% All we are claiming is that there is some way decompose the action of $\sigma$ into flips so that the levels of $[p,q]$ always remain adjacent.

\begin{lem} \label{valid flip}
Suppose that $\sigma \in F([p,q])$ flips the interval $[i,j]$ and that for every $e \in \EE_{\pi}$, we have $e \subseteq [i,j]$, $[i,j] \subseteq e$, or $[i,j] \cap e = \emptyset$.  Then $\sigma$ is valid with respect to $\pi$.
\end{lem}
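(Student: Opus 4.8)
The plan is to verify directly that $\EE_{\sigma.\pi}$ is a proper edge diagram, i.e., that it is well-defined (every image $\sigma.[a,b]$ of an edge is again an interval) and that its edges form a single path on the vertex set $[n]$. First I would observe that the flip $\sigma$ of $[i,j]$ acts as the identity on levels outside $[i,j]$ and, restricted to the levels $\ubar{i},\ubar{i+1},\dots,\ubar{j-1}$, reverses their order; in interval terms it sends $[i,j]$ to itself and, for $i\le a<b\le j$, sends $[a,b]$ to $[i+j-b,\,i+j-a]$, preserving its sign. The hypothesis that every edge $e\in\EE_\pi$ satisfies $e\subseteq[i,j]$, $[i,j]\subseteq e$, or $[i,j]\cap e=\emptyset$ is exactly what is needed so that each of the three cases can be handled: an edge disjoint from $[i,j]$ is fixed; an edge contained in $[i,j]$ is reflected within $[i,j]$ and stays an interval; and an edge $e=[c,d]$ containing $[i,j]$ has the sub-chunk $[i,j]$ reflected inside it but its endpoints $c,d$ (which lie in $[1,i]\cup[j,n]$ as vertices) untouched, so $\sigma.e$ is still the interval $[c,d]$. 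This establishes that $\EE_{\sigma.\pi}$ is well-defined.

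Next I would argue properness. By the remark preceding Lemma~\ref{level perm} (a well-defined edge diagram is proper iff its edges form a path), it suffices to rule out the alternative. Since $|\EE_{\sigma.\pi}|=n-1$ and these are directed intervals on the $n$ vertices, the only way they fail to form a single path is if the underlying multigraph is disconnected (equivalently contains a cycle), so by property~(2) of edge diagrams it is enough to show $\EE_{\sigma.\pi}$ contains no cycle. Here I would use the fact that $\sigma$ does not change \emph{which levels are covered with what multiplicity} in a way that creates cycles: more precisely, a cycle $v_1,v_2,\dots,v_k,v_1$ in $\EE_{\sigma.\pi}$ would, upon applying $\sigma^{-1}$ to the vertices, yield a closed sequence of edges of $\EE_\pi$ — one must check that $\sigma^{-1}$ carries the vertex sequence of a cycle to the vertex sequence of a cycle, which again is immediate from the three-case hypothesis since each edge of $\EE_{\sigma.\pi}$ is the $\sigma$-image of a genuine edge of $\EE_\pi$ with the same sign. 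Since $\EE_\pi$ is the edge diagram of a permutation it has no cycle, a contradiction; hence $\EE_{\sigma.\pi}$ has none either, so it is a path and $\sigma$ is valid with respect to $\pi$.

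Alternatively, and perhaps more cleanly, one can prove properness constructively by exhibiting the permutation $\sigma.\pi$: the flip of $[i,j]$ satisfying the hypothesis corresponds precisely to a flip of a bordered cylindrical block of $\pi$, and one writes down the resulting permutation and checks its edge diagram is $\EE_{\sigma.\pi}$. I would likely present the first (cycle-free) argument as the main line and mention the correspondence with block flips as motivation, since the paper's narrative is building toward that dictionary.

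The main obstacle I anticipate is the bookkeeping in the "$[i,j]\subseteq e$" case: one must be careful that when an edge strictly contains the flipped interval, reflecting the internal levels really does leave the edge's endpoints and orientation intact, so that $\sigma.e$ is literally the same directed interval $e$ — this is where the hypothesis (as opposed to mere well-definedness of $\sigma.[i,j]$) does real work, and it is easy to conflate "the set of covered levels is unchanged" with "the interval is unchanged." Getting the cycle-pullback argument precise (matching up orientations of edges between $\EE_{\sigma.\pi}$ and $\EE_\pi$) is the other point requiring a little care, though it is routine once the case analysis above is in hand.
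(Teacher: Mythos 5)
Your well-definedness argument is fine, and the three-case analysis of how $\sigma$ acts on an edge is the right start. The gap is in the properness step. A multigraph on $n$ vertices with $n-1$ edges that is acyclic (equivalently, for that edge count, connected) is a tree, not necessarily a path: you still have to rule out vertices of degree $\ge 3$, i.e., show that every vertex of $\EE_{\sigma.\pi}$ has in-degree at most $1$ and out-degree at most $1$. That is where the real content of the lemma lives. For a vertex $v$ with $i<v<j$ the check is easy (every edge incident to $v$ must be contained in $[i,j]$, and these are carried bijectively to the edges incident to $i+j-v$), and vertices outside $[i,j]$ are untouched; but at the boundary vertices the incidences are genuinely rearranged: an edge $e\subseteq[i,j]$ with endpoint $j$ has $\sigma.e$ incident to $i$, while an edge incident to $i$ that contains or is disjoint from $[i,j]$ keeps $i$ as an endpoint. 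So the degree of $i$ in $\EE_{\sigma.\pi}$ equals (number of edges at $i$ in $\EE_\pi$ not contained in $[i,j]$) plus (number of edges at $j$ in $\EE_\pi$ contained in $[i,j]$), and bounding this by $2$ with a consistent in/out pattern requires using how the single path $\EE_\pi$ enters and exits the interval $[i,j]$. This is exactly the ``the path behaves well in relation to $[i,j]$, so flipping reorders portions of the path'' argument the paper alludes to in its (omitted) proof, and your write-up skips it.

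The cycle argument has a related flaw: there is no single vertex bijection conjugating $\EE_{\sigma.\pi}$ to $\EE_\pi$, since endpoints of edges inside $[i,j]$ are reflected by $w\mapsto i+j-w$ while endpoints of the other edges are fixed, and these two rules disagree at $i$ and $j$; a cycle passing through $i$ or $j$ therefore need not pull back to a closed walk in $\EE_\pi$. You do not actually need your own acyclicity proof, though: property (2) already asserts that any well-defined $\EE_{\sigma.\pi}$ is cycle-free, so you may cite it once well-definedness is established. What is missing, and is not supplied by acyclicity plus the edge count, is the degree bound at $i$ and $j$. (Your alternative constructive route --- identifying the hypothesis with a bordered cylindrical block, writing down the flipped permutation explicitly, and checking that its edge diagram is $\EE_{\sigma.\pi}$ --- would also close the gap, but it is only mentioned, not carried out.)
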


We omit the proof of this lemma.  The idea is that the path in $\EE_\pi$ behaves well in relation to the interval $[i,j]$.  Therefore, flipping $[i,j]$ will simply reorder certain portions of the path.

It turns out that the intervals of the type described in Lemma \ref{valid flip} are precisely the tool we need to characterize the equivalence classes for $\sim$.  We therefore call a flip as described in Lemma \ref{valid flip} a \emph{valid flip} (with respect to $\pi$) and the interval it flips a \emph{valid interval} (in $\EE_\pi$). In general, we say that $\sigma$ transforms $\pi$ by a \emph{sequence of valid flips} if there exist $\sigma_1, \sigma_2, \ldots, \sigma_l \in S_{n-1}$ with $\sigma_l \sigma_{l-1} \cdots \sigma_1=\sigma$ such that $\sigma_i$ is a valid flip with respect to $\sigma_{i-1} \sigma_{i-2} \ldots \sigma_1 . \pi$ for every $i$. 

\begin{lem} \label{bordered=valid}
For every $\pi \in S_n$, the interval $[i,j]$ is valid in $\EE_\pi$ if and only if in the grid of $\pi$, the values $[i,j]$ (which are in positions $\pi^{-1}(i), \pi^{-1}(i+1), \ldots, \pi^{-1}(j)$) form a bordered cylindrical block. In this case, flipping the valid interval $[i,j]$ in $\EE_\pi$ is equivalent to flipping the corresponding bordered cylindrical block in the grid of $\pi$.
\end{lem}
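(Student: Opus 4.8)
The plan is to unwind both sides of the claimed equivalence into concrete statements about the path structure of $\EE_\pi$, and then show directly that the combinatorial condition defining a valid interval matches the geometric condition defining a bordered cylindrical block. First I would fix the interval $[i,j]$ (with $1\le i<j\le n$) and recall that the positions occupied by the values $i,i+1,\dots,j$ are exactly $\pi^{-1}(i),\dots,\pi^{-1}(j)$, and that in the edge diagram these are precisely the endpoints of the edges $e_k$ with $\ubar k\in[i,j]$ for some relevant $k$. The key translation is: the values $\{i,\dots,j\}$ form a \emph{cylindrical} block in the grid of $\pi$ if and only if the positions $\pi^{-1}(i),\dots,\pi^{-1}(j)$ form a set of consecutive positions \emph{when $1$ and $n$ are identified}, which in edge-diagram language says that the vertices $i,i+1,\dots,j$ together with the edges among them form a contiguous sub-path of the path $\pi(1),\pi(2),\dots,\pi(n)$ — possibly a sub-path that ``wraps around'' using the two endpoint vertices $\pi(1),\pi(n)$ of the whole path.

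Next I would match this with the valid-interval condition from Lemma~\ref{valid flip}, namely that every edge $e\in\EE_\pi$ satisfies $e\subseteq[i,j]$, $[i,j]\subseteq e$, or $[i,j]\cap e=\emptyset$. The edges with $e\subseteq[i,j]$ are exactly the edges internal to the block (both endpoints among $\{i,\dots,j\}$); the edges with $[i,j]\cap e=\emptyset$ are those entirely outside; and the edges with $[i,j]\subsetneq e$ are the ``long'' edges that jump over the whole interval. I claim there are at most two long edges, and this is where property~(1) of edge diagrams (the parity statement) does the work: an edge strictly containing $[i,j]$ must have one endpoint $\le i-1$ wait — more precisely one endpoint in $\{1,\dots,i\}\setminus$, i.e. at a vertex $\le i$ below and one $\ge j$ above, so it passes through every level $\ubar{i},\dots,\ubar{j-1}$. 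If no edge with an endpoint strictly inside $\{i+1,\dots,j-1\}$ leaves the interval (which is exactly the valid-flip hypothesis), then the only edges touching the interior vertices $i+1,\dots,j-1$ are internal edges, so those vertices have all their incident edges inside — forcing the block to be ``bordered'' with the extreme values at the outer positions, and the (at most two) long edges are precisely the edges that connect the block to the rest of the path, entering and leaving at the two outer positions $\pi^{-1}(i)$ and $\pi^{-1}(j)$ (or, in the wrap-around case, involving $\pi(1)$ and $\pi(n)$). Conversely, if the values form a bordered cylindrical block, the sub-path on $\{i,\dots,j\}$ is contiguous in the cyclic position order, so no edge with an interior endpoint escapes the interval, and a short case check on the two boundary edges of the block shows they either lie in $[i,j]$, contain it, or miss it — giving the valid-flip condition.

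Having established the equivalence of the two conditions, the last task is to check that the two flip operations agree. Here I would observe that a valid flip $\sigma$ flipping $[i,j]$ reverses the order of the levels $\ubar i,\dots,\ubar{j-1}$ and fixes all others; by Lemma~\ref{level perm} and the description of the $S_{n-1}$-action on edge diagrams, its effect on an internal edge $[a,b]_\pm$ is to send it to $[\,i+j-b,\,i+j-a\,]_{\mp}$ — i.e. a $180^\circ$ rotation of the contents within the value-window $[i,j]$, with the sign flipped — while long edges keep their endpoints and edges outside are unchanged. Translating through $\pi^{-1}$: the positions of the block are permuted so that the value-reversal becomes, on the grid, exactly the $180^\circ$ rotation of the square subgrid (in the ordinary-block case) or the wrap-around rotation described before Theorem~\ref{Main Thm} (when the block spans the left/right boundary, the levels-below-$i$ and levels-above-$j$ book-keeping is precisely the ``shift the non-block entries accordingly'' clause). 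I expect the main obstacle to be the wrap-around bookkeeping: making the identification of positions $1$ and $n$ rigorous in the edge-diagram picture, and verifying carefully that in the spanning case the at-most-two long edges correspond to the two boundary edges of the cylindrical block and that the flip's action on the positions outside the value-window $[i,j]$ reproduces exactly the left/right shift in the grid definition of a cylindrical flip. The internal-edge computation and the non-spanning case are routine; it is the cyclic/boundary case that needs the delicate argument, and property~(1) together with the ``no cycles'' property~(2) is the tool I would lean on to pin it down.
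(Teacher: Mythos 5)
Your overall strategy --- translating both the validity condition and the bordered-cylindrical-block condition into statements about which vertices of the path $\pi(1),\dots,\pi(n)$ can be path-adjacent to vertices outside $\{i,\dots,j\}$ --- is the natural one, and the converse direction and the flip-correspondence are essentially right (the paper omits its own proof of this lemma, so there is nothing to compare against directly). But there is a genuine gap in your forward direction. You identify the valid-interval hypothesis with the edge condition of Lemma~\ref{valid flip} alone (``every $e\in\EE_\pi$ is contained in, contains, or is disjoint from $[i,j]$''), calling this ``exactly the valid-flip hypothesis.'' It is not: Lemma~\ref{valid flip} also requires $\sigma\in F([p,q])$, i.e.\ that $[i,j]$ be nested with or disjoint from the interval spanned by the path's endpoints $\pi(1),\pi(n)$. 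Your claimed implication ``no edge with an endpoint strictly inside $\{i+1,\dots,j-1\}$ leaves the interval $\Rightarrow$ the values form a bordered cylindrical block'' is false without that extra hypothesis. Take $\pi=142365$ and $[i,j]=[4,6]$: the edges are $[1,4]_+,[2,4]_-,[2,3]_+,[3,6]_+,[5,6]_-$, each of which is contained in, contains, or is disjoint from $[4,6]$, and the unique interior vertex $5$ has its only path-neighbor, namely $6$, inside the block; yet the values $4,5,6$ occupy positions $2,6,5$, which is not a cyclic interval of positions, so they form no cylindrical block --- and indeed flipping $[4,6]$ yields the edge multiset $[1,4]_+,[2,4]_-,[2,3]_+,[3,6]_+,[4,5]_-$, in which vertex $4$ has degree $3$, so the result is not a path. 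What excludes this configuration is precisely the $F([p,q])$ condition: here $[p,q]=[1,5]$ overlaps $[4,6]$ without nesting. Structurally, the issue is that a run of the position set $\pi^{-1}([i,j])$ whose end falls at position $1$ or $n$ contributes no boundary edge, so the edge condition alone cannot force the value there to be $i$ or $j$; the constraint on $\pi(1)$ and $\pi(n)$ must come from $F([p,q])$, and your argument never invokes it.

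Two smaller corrections. First, your bookkeeping of ``long'' edges conflates two different things: an edge properly containing $[i,j]$ may have both endpoints outside $\{i,\dots,j\}$ (so it is not a boundary edge of the block at all), while a boundary edge leaving the block at $i$ or $j$ may be \emph{disjoint} from $[i,j]$ rather than contain it (e.g.\ an edge from $j$ up to some $w>j$); the clean dichotomy is the path-adjacency statement, not a count of long edges. Second, in the flip-correspondence step the level permutation sends an internal edge $[a,b]_\pm$ to $[i+j-b,\,i+j-a]_\pm$ with the sign \emph{preserved} (the action on edge diagrams preserves signs by definition, and a $180^\circ$ rotation of a block also preserves ascents and descents), not reversed as you wrote; your conclusion survives only because you made the same sign error on both sides of the comparison.
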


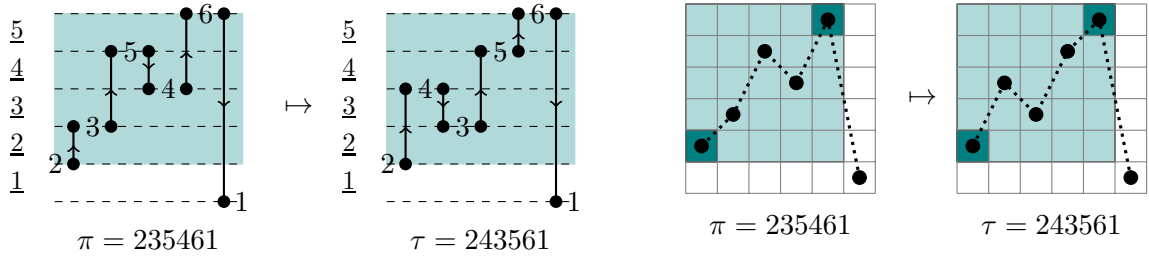
\begin{figure}[htb]
\centering
		\begin{tikzpicture} [scale=.5]
		\draw[fill=teal!30!white,teal!30!white] (.5,2) rectangle (5.5,6);
		\draw[thick,*->-*](1,2) node[left]{2}--(1,3);
		\draw[thick,*->-*](2,3) node[left]{3}--(2,5);
		\draw[thick,*->-*](3,5) node[left]{5}--(3,4);
		\draw[thick,*->-*](4,4) node[left]{4}--(4,6);
		\draw[thick,*->-*](5,6) node[left]{6}--(5,1) node[right]{1};
		{\foreach \x in {1,2,...,6}
		\draw[dashed] (.5,\x)--(5.5,\x);
		}
		{\foreach \x in {1,2,...,5}
		\draw (-.5,\x+.5) node{\small$\ubar{\x}$};
		}	
		\draw (3,0) node{$\pi=235461$};
		\draw (7,3.5) node{	$\mapsto$};			
	\end{tikzpicture}
	\begin{tikzpicture} [scale=.5]
		\draw[fill=teal!30!white,teal!30!white] (.5,2) rectangle (5.5,6);
		\draw[thick,*->-*](4,5) node[left]{5}--(4,6);
		\draw[thick,*->-*](3,3) node[left]{3}--(3,5);
		\draw[thick,*->-*](2,4) node[left]{4}--(2,3);
		\draw[thick,*->-*](1,2) node[left]{2}--(1,4);
		\draw[thick,*->-*](5,6) node[left]{6}--(5,1) node[right]{1};
		{\foreach \x in {1,2,...,6}
		\draw[dashed] (.5,\x)--(5.5,\x);
		}
		\draw(-.5,1.5) node{\small $\ubar{1}$};
		\draw(-.5,2.5) node{\small $\ubar{2}$};
		\draw(-.5,3.5) node{\small $\ubar{3}$};
		\draw(-.5,4.5) node{\small $\ubar{4}$};
		\draw(-.5,5.5) node{\small $\ubar{5}$};
		\draw (3,0) node{$\tau=243561$};
	\end{tikzpicture}
			\hspace{10mm}
	\begin{tikzpicture}[scale=.42]
		\draw[fill=teal!30!white] (0,1) rectangle (5,6);
		\draw[fill=teal] (0,1) rectangle (1,2);
		\draw[fill=teal] (4,5) rectangle (5,6);
		\draw[gray] (0,0) grid (6,6);
		\draw (0.5,1.5) node[Bullet] {} ;
		\draw (1.5,2.5) node[Bullet] {} ;
		\draw (2.5,4.5) node[Bullet] {} ;
		\draw (3.5,3.5) node[Bullet] {} ;
		\draw (4.5,5.5) node[Bullet] {} ;
		\draw (5.5,0.5) node[Bullet] {} ;
		\draw (3,-1) node{$\pi=235461$};
		\draw (7.5,3) node{$\mapsto$};	
		\draw[very thick, dotted] (.5,1.5)--(1.5,2.5)--(2.5,4.5)--(3.5,3.5)--(4.5,5.5)--(5.5,0.5);
				\draw (4,-1.75) node{};	
	\end{tikzpicture}
	\begin{tikzpicture}[scale=.42]
		\draw[fill=teal!30!white] (0,1) rectangle (5,6);
		\draw[fill=teal] (0,1) rectangle (1,2);
		\draw[fill=teal] (4,5) rectangle (5,6);
		\draw[gray] (0,0) grid (6,6);
		\draw (0.5,1.5) node[Bullet] {} ;
		\draw (1.5,3.5) node[Bullet] {} ;
		\draw (2.5,2.5) node[Bullet] {} ;
		\draw (3.5,4.5) node[Bullet] {} ;
		\draw (4.5,5.5) node[Bullet] {} ;
		\draw (5.5,0.5) node[Bullet] {} ;
		\draw (3,-1) node{$\tau=243561$};
		\draw[very thick, dotted] (.5,1.5)--(1.5,3.5)--(2.5,2.5)--(3.5,4.5)--(4.5,5.5)--(5.5,0.5);
		\draw (4,-1.75) node{};
	\end{tikzpicture}
\caption{Flipping valid interval $[2,6]$ in the edge diagram for $\pi=235461$ yields the same result as flipping the bordered cylindrical block with values $[2,6]$ (and positions $[1,5]$) in the grid of $\pi$.}
\label{bordered=valid fig}
\end{figure}

For an example of Lemma \ref{bordered=valid}, see Figure~\ref{bordered=valid fig}.  Now we can restate Theorem~\ref{Main Thm} as follows:

\begin{thm} \label{main theorem}
Let $\pi, \tau \in S_n$.  Then $\pi \sim \tau$ if and only if $\tau$ can be obtained from $\pi$ by a sequence of valid flips.
\end{thm}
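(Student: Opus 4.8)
\medskip
\noindent\emph{Proof strategy.}
The plan is to prove the two implications separately. The ``if'' direction is immediate from the machinery already in place: if $\tau$ is obtained from $\pi$ by a sequence of valid flips $\sigma_1,\dots,\sigma_\ell$, so that with $\sigma=\sigma_\ell\cdots\sigma_1$ each $\sigma_i$ is a valid flip with respect to $\sigma_{i-1}\cdots\sigma_1.\pi$, then by the definition of valid flip and Lemma~\ref{valid flip} every intermediate diagram $\EE_{\sigma_i\cdots\sigma_1.\pi}$ is proper, so, since valid actions compose, $\EE_{\sigma.\pi}=\EE_\tau$, and Lemma~\ref{level perm} gives $\pi\sim\tau$. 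The rest of this sketch concerns the ``only if'' direction.

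Assume $\pi\sim\tau$. By Lemma~\ref{level perm} there is $\sigma\in S_{n-1}$ with $\EE_{\sigma.\pi}=\EE_\tau$, and the goal is to realize $\sigma$ as a sequence of valid flips with respect to $\pi$. Write $\{p,q\}=\{\pi(1),\pi(n)\}$ with $p<q$. The discussion preceding Lemma~\ref{valid flip} already factors $\sigma=\sigma_k\cdots\sigma_1$ into flips with $\sigma_i\in F(\sigma_{i-1}\cdots\sigma_1.[p,q])$; such factors respect the parity constraint of Property~(1), but they need not satisfy the stronger hypothesis of Lemma~\ref{valid flip}, namely that every edge of the current diagram be nested in, contain, or be disjoint from the flipped interval, so they need not be valid flips. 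The core task is to upgrade this to a factorization into genuine valid flips, which I would carry out by induction on $n$ using the structural theory of Sections~\ref{Irreducible Intervals} and~\ref{Cohesive Intervals and Partitions}.

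For the inductive step, one extracts from $\EE_\pi$ a canonical ``cohesive partition'' of its levels, assembled from the valid intervals --- equivalently, by Lemma~\ref{bordered=valid}, the bordered cylindrical blocks --- and their nesting, together with the distinguished interval $[p,q]$. One then shows that, after pre-composing with valid flips if necessary, $\sigma$ permutes the blocks of this partition and acts internally on each of them. The problem then splits into two strictly smaller instances: the induced action on the quotient edge diagram obtained by collapsing each block to a single level, a valid level permutation whose valid flips lift to valid flips of $\EE_\pi$ that move whole blocks; and, for each block $B$, the restricted action on the sub-diagram formed by the edges lying inside $B$ (with edges straddling $B$ recorded as a single super-level), again a valid level permutation whose valid flips lift to valid flips of $\EE_\pi$ supported on $B$. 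Composing all the lifted valid-flip sequences reconstitutes $\sigma$. Finally, translating through Lemma~\ref{bordered=valid} --- under which valid flips correspond to flips of bordered cylindrical blocks and the flip of the interval $[1,n]$ corresponds to the reverse-complement --- this also yields Theorem~\ref{Main Thm}.

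The main obstacle is the step that isolates the correct cohesive partition and shows that $\sigma$ can be made to respect it. This is exactly where irreducibility is needed: an irreducible valid interval has no nontrivial valid sub-block, and one exploits that rigidity to rule out the possibility that $\sigma$ permutes levels across block boundaries in a pattern that no sequence of valid flips could imitate. Two further complications are that the privileged parity role of $[p,q]$ means the sub-diagram inside a block is not the induced sub-permutation but a path which may carry its own pair of endpoints --- so the induction must be set up for this slightly larger class of edge diagrams --- and that, since the intermediate diagrams in the naive $F([p,q])$-factorization need not even be well-defined, the valid-flip sequence has to be built directly from the cohesive structure rather than by patching that factorization one flip at a time; the degenerate cases, in which $\EE_\pi$ admits no nontrivial cohesive refinement, must then be checked by hand.
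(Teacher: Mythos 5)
Your strategy coincides with the paper's: both directions reduce to Lemma~\ref{level perm}, and the converse is proved by decomposing the edge diagram into irreducible and cohesive intervals and inducting via a factorization of the valid (signed) permutation into an action inside a proper cohesive interval and an action on the coarsened partition (the paper's Lemma~\ref{decompose lemma} feeding into Proposition~\ref{valid flips}), with the case of no proper cohesive interval singled out for direct treatment. Your sketch is essentially the paper's argument, down to identifying the same hard case.
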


In order to prove this theorem, we need to further explore the structure of edge diagrams.  The aim of the definitions and lemmas in the following section will be to decompose our edge diagrams into nested structures to which we will be able to apply an inductive argument.

%==================================================================
\section{Irreducible Intervals} \label{Irreducible Intervals}
%==================================================================
For given $\pi\in S_n$, most permutations of the levels of its edge diagram are not be valid.
Our induction argument will rely on the ability to partition the levels of the edge diagram into intervals that remain intervals under any valid permutation of the levels.  We first introduce the idea of an irreducible interval, which is a maximal interval whose levels remain adjacent under the action of any valid permutation.  Then we show that every edge diagram can be uniquely partitioned into intervals of this type.

\begin{definition} \label{IrredInts}
Let $\pi \in S_n$.  An interval $[s,t]$ in the edge diagram of $\pi$ is said to satisfy the \emph{linked conditions} if, for every $\sigma \in S_{n-1}$ that is valid with respect to $\pi$, the following are true:
\begin{enumerate}[(1)]
\item The image $\sigma.[s,t]$ is an interval.
\item If we let $i,j \in [n]$ be such that $\sigma.[s,t]=[i,j]$, then the ordered tuple $(\sigma(s),\sigma(s+1),\sigma(s+2),\ldots,\sigma(t-1))$ equals either $(i,i+1,\ldots,j-1)$ or $(j-1,j-2,\ldots,i)$.
\end{enumerate}

Additionally, $[s,t]$ is called an \emph{irreducible interval} if it is a maximal interval that satisfies the linked conditions (i.e. for any interval $[x,y]$ such that $[s,t] \subsetneq [x,y]$, $[x,y]$ does not satisfy the linked conditions).
\end{definition}

Note that any proper subinterval of an irreducible interval will satisfy the linked conditions but will fail the maximality condition.

\begin{lem} \label{irreducible}
For any $\pi \in S_n$, the interval $[1,n]$ in the edge diagram of $\pi$ can be uniquely partitioned into irreducible intervals.
\end{lem}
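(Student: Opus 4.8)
\textbf{Proof plan for Lemma~\ref{irreducible}.}

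The plan is to establish the existence and uniqueness of the partition by showing that the irreducible intervals containing a fixed level are forced, and that consecutive irreducible intervals fit together exactly. First I would record two closure properties of the linked conditions. Property~A: the singleton interval $[i,i+1]$ (a single level $\ubar{i}$) trivially satisfies the linked conditions for every $\pi$, since its image under any valid $\sigma$ is the single level $\ubar{\sigma(i)}$, which is an interval, and condition~(2) of Definition~\ref{IrredInts} is vacuous for a one-level interval. Hence every level lies in at least one interval satisfying the linked conditions, so maximal such intervals exist and cover $[1,n]$. Property~B (the crucial step): if $[s,t]$ and $[s',t']$ both satisfy the linked conditions and they \emph{overlap} in the sense that $s < s' < t < t'$ or share at least one common level with neither containing the other, then their union $[s,t']$ (equivalently the smallest interval containing both) also satisfies the linked conditions. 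Granting Property~B, two distinct irreducible intervals cannot overlap: if they did, their union would satisfy the linked conditions and strictly contain each, contradicting maximality. Thus any two irreducible intervals are either disjoint or nested; but nesting is also impossible by maximality, so distinct irreducible intervals are pairwise disjoint. Combined with the fact that they cover $[1,n]$ (Property~A), and that a maximal chain of nested linked intervals through each level terminates in a unique irreducible interval, we get that $[1,n]$ is partitioned into irreducible intervals, and the partition is unique because the irreducible interval through a given level is uniquely determined as the maximal linked interval containing it.

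The heart of the argument, and the step I expect to be the main obstacle, is Property~B: proving that overlapping linked intervals have a linked union. Here the approach is to fix an arbitrary valid $\sigma \in S_{n-1}$ and analyze how $\sigma$ acts on $[s,t] \cup [s',t']$. From condition~(1) applied to each of $[s,t]$ and $[s',t']$, the images $\sigma.[s,t]$ and $\sigma.[s',t']$ are both intervals; since these two source intervals overlap and share levels, and $\sigma$ is a bijection on levels, their images are intervals sharing levels, hence their union $\sigma.([s,t] \cup [s',t'])$ is again an interval — this gives condition~(1) for the union. For condition~(2), one uses that on the overlap $[s',t]$ the two monotone orderings guaranteed by condition~(2) for $[s,t]$ and for $[s',t']$ must be \emph{compatible}: each says $(\sigma(s'), \sigma(s'+1), \ldots, \sigma(t-1))$ is monotone (increasing or decreasing through consecutive integers). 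Because the overlap is nonempty and has the same orientation in both, the two monotone runs glue into a single monotone run on all of $[s,t']$, i.e. $(\sigma(s), \ldots, \sigma(t'-1))$ is $(i, i+1, \ldots, j-1)$ or its reverse where $[i,j] = \sigma.[s,t']$. A short case check on whether the overlap forces matching directions (it does, since a level common to both intervals cannot simultaneously be increasing-oriented in one indexing and the images be consistent) completes condition~(2). The delicate point to get right is the bookkeeping when $[s,t]$ and $[s',t']$ overlap but one does not contain the other, ensuring the orientations align; I would handle this by noting that if $t - 1 \ge s'$ then $\ubar{s'}$ through $\ubar{t-1}$ appear in both monotone tuples in the same positions relative to their own endpoints, forcing a common direction.

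Finally, with Property~B in hand, I would assemble the proof: existence of the partition from Property~A plus the non-overlap of irreducibles; and uniqueness by observing that in any partition of $[1,n]$ into irreducible intervals, the block containing level $\ubar{k}$ must be \emph{the} maximal linked interval containing $\ubar{k}$ — which exists and is unique because the poset of linked intervals containing $\ubar{k}$, ordered by inclusion, is a chain (any two of its members overlap at $\ubar{k}$, so by Property~B are comparable) with a top element. Hence every irreducible-interval partition coincides with the canonical one, proving Lemma~\ref{irreducible}.
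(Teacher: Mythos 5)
Your proof is correct and takes essentially the same route as the paper's: show that two overlapping intervals satisfying the linked conditions have a union that also satisfies them, conclude by maximality that distinct irreducible intervals are disjoint or equal, and get coverage and uniqueness from the width-one intervals plus maximality. Your Property~B is exactly the step the paper compresses into ``the second linked condition \dots quickly implies that $[s,v]$ satisfies the condition as well,'' and your case analysis gluing the two monotone runs on the overlap (using that $\sigma$ is a bijection on levels to force matching orientations) is a sound elaboration of that compressed step.
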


\begin{proof}
Suppose that $[s,t]$ and $[u,v]$ are irreducible intervals with a nonempty intersection.  By the maximality condition, $[s,t]$ cannot properly contain $[u,v]$ and vice versa.  Without loss of generality, assume that $s \leq u$ and $t \leq v$.  Then $[s,t] \cap [u,v]= [u,t]$.  Since both $[s,t]$ and $[u,v]$ satisfy the linked conditions, every $\sigma$ that is valid with respect to $\pi$ maps $[s,t]$ and $[u,v]$ to intervals.  Since these intervals have non-empty intersection, it follows that $\sigma$ maps $[s,t] \cup [u,v]=[s,v]$ to an interval. 
In fact, the second linked condition for $[s,t]$ and $[u,v]$ quickly implies that $[s,v]$ satisfies the condition as well.
By the maximality of irreducible intervals, this is only possible if $s=u$ and $t=v$.  Therefore, any two irreducible intervals are either disjoint or equal.

Since irreducible intervals of width 1 trivially satisfy the linked conditions, it is clear that every level of the edge diagram is contained in some irreducible interval.  The condition on maximality for irreducible intervals implies that the partition is unique.
\end{proof}

We let $\mathcal{I_{\pi}}=\{[x_i,x_{i+1}]:0\le i<k\}$, where $x_0=1$ and $x_k=n$, denote the partition of the edge diagram for $\pi$ into irreducible intervals. We call $\mathcal{I_{\pi}}$ the \emph{irreducible partition} of $\pi$, and we call the $x_i$'s its borders.

Conveniently, valid intervals of width greater than one are the union of adjacent irreducible intervals.  Indeed, given a valid interval $[i,j]$ in $\EE_{\pi}$, we know by Lemma~\ref{valid flip} that if $\sigma$ flips $[i,j]$, then $\EE_{\sigma.\pi}$ is a proper edge diagram.  When $j \neq i+1$, this implies that $i$ and $j$ are borders of the irreducible partition $\mathcal{I}_{\pi}$, since $\sigma.(\ubar{i-1} \cup \ubar{i})=\ubar{i-1} \cup \ubar{j-1}$ and $\sigma.(\ubar{j-1} \cup \ubar{j})=\ubar{i} \cup \ubar{j}$ are not intervals.

Finding irreducible intervals using Definition~\ref{IrredInts} is impractical. One consequence of Theorem~\ref{main theorem} will be that irreducible intervals are completely determined by the valid intervals.  It will follow that any interval $[a,b]$ satisfies the linked conditions if and only if $[a,b]$ is contained in or disjoint from every valid interval in $\EE_{\pi}$.  Thus, $x_i$ is a border of $\mathcal{I_{\pi}}$ if and only if $x_i$ is an endpoint of some valid interval.

Although a few permutations only have irreducible intervals of width one, such as $1\,2\,3 \ldots n$ and $1\,n\,(n-1)\ldots 2$, permutations often have wide irreducible intervals, as is the case whenever the edge diagram has an edge of width two.  In fact, many permutations have one single irreducible interval $[1,n]$.  Examples of irreducible intervals are given in Figure \ref{irredintex}.

%=======================================================

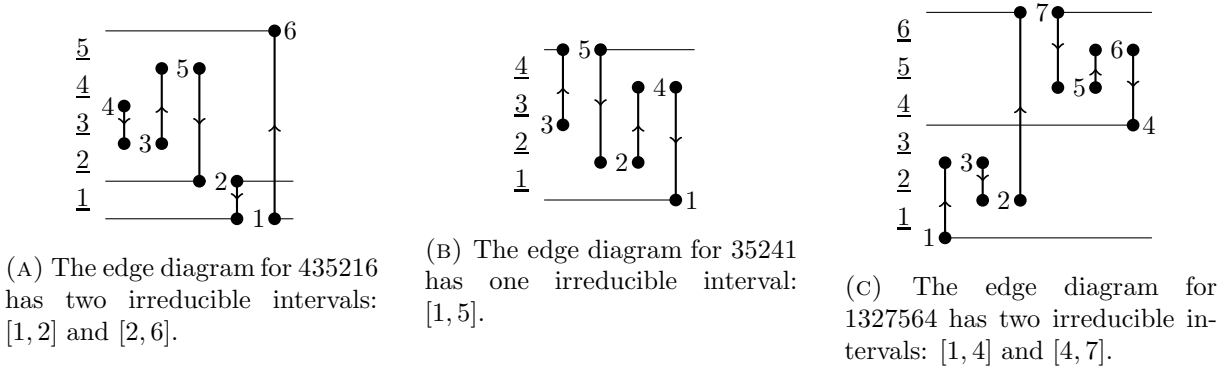
\begin{figure}[htb] %%Examples of irreducible intervals
	\centering
	\begin{subfigure}{.3\textwidth}
	\centering
	\small{
		\begin{tikzpicture} [scale=.5]
			\draw (.5,1)--(5.5,1);
			\draw (.5,2)--(5.5,2);		
			\draw (.5,6)--(5.5,6);
			\draw (1,4) node[left, rectangle,fill=white]{4};
			\draw (4,2) node[left, rectangle,fill=white]{2};
			\draw (5,1) node[left, rectangle,fill=white]{1};
			\draw (5,6) node[rectangle, fill=white, right]{6};
			\draw[thick,*->-*](1,4)--(1,3);
			\draw[thick,*->-*](2,3) node[left]{3}--(2,5);
			\draw[thick,*->-*](3,5) node[left]{5}--(3,2);
			\draw[thick,*->-*](4,2)--(4,1);
			\draw[thick,*->-*](5,1)--(5,6);
			\foreach \x in {1,2,...,5}
			\draw (-.1,\x-.5) node[yshift=.5cm]{$\ubar{\x}$};							
		\end{tikzpicture} }
		\caption{The edge diagram for $435216$ has two irreducible intervals: $[1,2]$ and $[2,6]$.}
	\end{subfigure} \hskip 5mm
	\begin{subfigure}{.3\textwidth} 
	\centering
	\small{
		\begin{tikzpicture} [scale=.5]
			\draw (.5,1)--(4.5,1);		
			\draw (.5,5)--(4.5,5);
			\draw (1,3) node[left, rectangle,fill=white]{3};
			\draw (2,5) node[left, rectangle,fill=white]{5};
			\draw (4,4) node[left, rectangle,fill=white]{4};
			\draw (4,1) node[right, rectangle,fill=white]{1};
			\draw[thick,*->-*](1,3)--(1,5);
			\draw[thick,*->-*](2,5)--(2,2);
			\draw[thick,*->-*](3,2) node[left]{2}--(3,4);
			\draw[thick,*->-*](4,4)--(4,1);
			\foreach \x in {1,2,...,4}
			\draw (-.1,\x-.5) node[yshift=.5cm]{$\ubar{\x}$};
		\end{tikzpicture} 
		}
				\caption{The edge diagram for $35241$ has one irreducible interval: $[1,5]$.}
	\end{subfigure} \hskip 5mm
	\begin{subfigure}{.3\textwidth}
	\centering
		\small{
		\begin{tikzpicture} [scale=.5]
			\draw (.5,1)--(6.5,1);		
			\draw (.5,4)--(6.5,4);
			\draw (.5,7)--(6.5,7);
			\draw (1,1) node[left, rectangle,fill=white]{1};
			\draw (2,3) node[left, rectangle,fill=white]{3};
			\draw (4,7) node[left, rectangle,fill=white]{7};
			\draw (5,5) node[left, rectangle,fill=white]{5};
			\draw (6,6) node[left, rectangle,fill=white]{6};
			\draw (6,4) node[rectangle, fill=white, right]{4};
			\draw[thick,*->-*](1,1)--(1,3);
			\draw[thick,*->-*](2,3)--(2,2);
			\draw[thick,*->-*](3,2) node[left]{2}--(3,7);
			\draw[thick,*->-*](4,7)--(4,5);
			\draw[thick,*->-*](5,5)--(5,6);
			\draw[thick,*->-*](6,6)--(6,4);
			\foreach \x in {1,2,...,6}
			\draw (-.1,\x-.5) node[yshift=.5cm]{$\ubar{\x}$};
		\end{tikzpicture}
		}
		\caption{The edge diagram for $1327564$ has two irreducible intervals: $[1,4]$ and $[4,7]$.} \label{sepinsep}
	\end{subfigure}
	\caption{Examples of irreducible intervals} \label{irredintex}
\end{figure}
%=======================================================

Lemma \ref{irreducible}  allows us to consider signed permutations of the irreducible intervals of $\pi$, rather than unsigned permutations of the levels, since every $\sigma$ that is valid with respect to $\pi$ permutes and possibly flips its irreducible blocks. Recall that a signed permutation is a bijection $\mu: [-n] \cup [n] \rightarrow [-n] \cup [n]$ (where $[-n] \cup [n]= \{-n,\ldots,-1,1,\ldots,n \}$) such that $\mu(-i)=-\mu(i)$ for all $i \in [n]$.  We write the signed permutation $\mu$ as $\mu(1)\mu(2) \ldots \mu(n)$, leaving out the images of the negative numbers since they follow, and writing $\overline{\mu(i)}$ instead of $-\mu(i)$.  Note that 
the barring operation is an involution, i.e. $\overline{\overline{\mu(i)}}=\mu(i)$.  Let $|\mu(i)|$ denote the entry $\mu(i)$ without a bar. We denote the set of signed permutations of length $k$ by $B_k$, and the infinite set of all signed permutations by~$B$.

If $\pi \in S_n$ has $k$ irreducible intervals, we can apply a signed permutation $\mu \in B_k$ to the edge diagram of $\pi$ as follows. Each entry of $\mu$ describes where the corresponding irreducible interval of $\pi$ is moved, and a barred entry indicates that  the order of the levels inside the irreducible interval is reversed.  As an example, see Figure \ref{signedex}.  If the result of applying $\mu$ to an edge diagram is a well-defined edge diagram, we denote it by $\EE_{\mu.\pi}$.  If, additionally, this is a proper edge diagram, we say that $\mu$ is \emph{valid} with respect to~$\pi$ and denote the corresponding permutation by $\mu.\pi$.

 We need to generalize our notion of a valid flip to signed permutations. If $[x_i,x_j]$ is a valid interval in $\EE_\pi$, then we say that 
$\mu =1 \, 2 \, 3 \, \cdots \, (i-1) \, (\overline{j-1}) \, (\overline{j-2}) \, \cdots \,  \overline{i} \, j \, \cdots \, k\in B_k$ is a \emph{valid flip} with respect to $\pi$. % of the interval $[x_i,x_j]$.   
 We say that $\mu$ transforms $\pi$ by a \emph{sequence of valid flips} if there exist $\mu_1, \mu_2, \ldots, \mu_l \in B_{k}$ with $\mu_l \mu_{l-1} \cdots \mu_1=\mu$ such that $\mu_i$ is a valid flip with respect to $\mu_{i-1} \mu_{i-2} \ldots \mu_1. \pi$ for all $i$.

%=======================================================
\begin{figure}[htb] %%$2,1,3,5,4,6,8,9,7,10$ is mapped to $1,3,2,4,7,5,6,8,10,9$ by applying $\mu=\bar{3} 1 \bar{2}$
	\centering \footnotesize{
	\begin{tikzpicture} [scale=.55]
		\draw (.5,1)--(9.5,1);
		\draw (.5,3)--(9.5,3);		
		\draw (.5,6)--(9.5,6);
		\draw(.5,10)--(9.5,10);
		\draw (1,2) node[left, rectangle,fill=white]{2};
		\draw (2,1) node[left, rectangle,fill=white]{1};
		\draw (3,3) node[left, rectangle,fill=white]{3};
		\draw (4,5) node[left, rectangle,fill=white]{5};
		\draw (5,4) node[left, rectangle,fill=white]{4};
		\draw (6,6) node[left, rectangle,fill=white]{6};
		\draw (7,9) node[left, rectangle,fill=white]{9};
		\draw (8,7) node[left, rectangle,fill=white]{7};
		\draw (9,8) node[left, rectangle,fill=white]{8};
		\draw (9,10) node[rectangle, fill=white, right]{10};
		\draw[thick,*->-*](1,2)--(1,1);
		\draw[thick,*->-*](2,1)--(2,3);
		\draw[thick,*->-*](3,3)--(3,5);
		\draw[thick,*->-*](4,5)--(4,4);
		\draw[thick,*->-*](5,4)--(5,6);
		\draw[thick,*->-*](6,6)--(6,9);
		\draw[thick,*->-*](7,9)--(7,7);
		\draw[thick,*->-*](8,7)--(8,8);
		\draw[thick,*->-*](9,8)--(9,10);	
		\draw (-.2,2) node{$\mathbf{1}$};	
		\draw (-.2,4.5) node{$\mathbf{2}$};
		\draw (-.2,8) node{$\mathbf{3}$};
		\draw (12,5) node{$\xmapsto{\mu=\bar{3} 1 \bar{2}}$};				
	\end{tikzpicture}
	\begin{tikzpicture} [scale=.45]
		\draw (.5,1)--(9.5,1);
		\draw (.5,4)--(9.5,4);		
		\draw (.5,8)--(9.5,8);
		\draw(.5,10)--(9.5,10);
		\draw (1,1) node[left, rectangle,fill=white]{1};
		\draw (2,3) node[left, rectangle,fill=white]{3};
		\draw (3,2) node[left, rectangle,fill=white]{2};
		\draw (4,4) node[left, rectangle,fill=white]{4};
		\draw (5,6) node[left, rectangle,fill=white]{6};
		\draw (6,7) node[left, rectangle,fill=white]{7};
		\draw (7,5) node[left, rectangle,fill=white]{5};
		\draw (8,8) node[left, rectangle,fill=white]{8};
		\draw (9,10) node[left, rectangle,fill=white]{10};
		\draw (9,9) node[rectangle, fill=white, right]{9};
		\draw[thick,*->-*](1,1)--(1,3);
		\draw[thick,*->-*](2,3)--(2,2);
		\draw[thick,*->-*](3,2)--(3,4);
		\draw[thick,*->-*](4,4)--(4,6);
		\draw[thick,*->-*](5,6)--(5,7);
		\draw[thick,*->-*](6,7)--(6,5);
		\draw[thick,*->-*](7,5)--(7,8);
		\draw[thick,*->-*](8,8)--(8,10);
		\draw[thick,*->-*](9,10)--(9,9);	
		\draw (-.2,2.5) node{$\mathbf{2}$};	
		\draw (-.2,6) node{$\mathbf{\overline{3}}$};
		\draw (-.2,9) node{$\mathbf{\overline{1}}$};					
	\end{tikzpicture} }
	\caption{$2\,1\,3\,5\,4\,6\,9\,7\,8\,10$ is mapped to $1\,3\,2\,4\,7\,5\,6\,8\,10\,9$ by applying $\mu=\bar{3} 1 \bar{2}$} \label{signedex}
\end{figure}
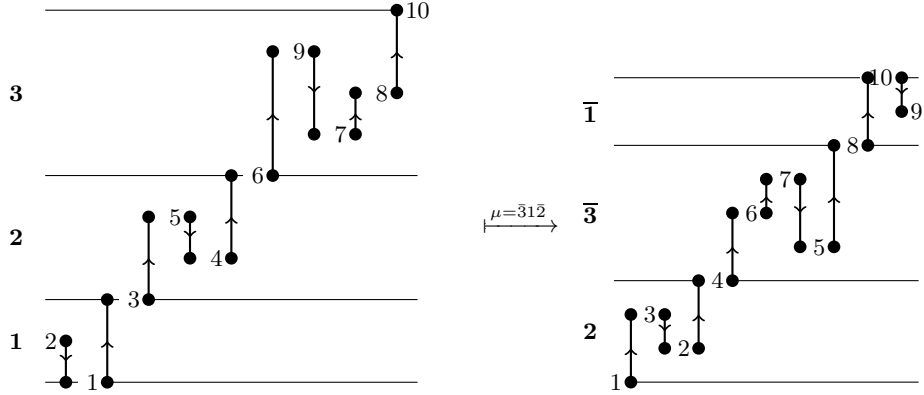
%=======================================================

Given $\mu \in B_k$, its \emph{reverse-complement} is the signed permutation with $\mu^{RC}(i)=\overline{\mu(k-i+1)}$ for $1 \leq i \leq k$.  For $\alpha_1,\alpha_2,\ldots,\alpha_k \in B$, we define the \emph{inflation} of $\mu$ by $\alpha_1,\alpha_2,\ldots,\alpha_k$ to be the signed permutation obtained by replacing each $\mu(i)$ with a block that has the pattern $\alpha_i$ if $\mu(i)$ is positive, and $\alpha_i^{RC}$ if $\mu(i)$ is negative.  The relative ordering of the blocks is determined by the relative unsigned ordering on $\mu$ (meaning we consider simply absolute values).  We denote this inflation by $\mu[\alpha_1,\alpha_2,\ldots,\alpha_k]$.
 For example, $3 \bar{1} 2 [\bar{2} 1,\bar{3} 1 \bar{2}, 1]=\bar{6} 5 \, 2 \bar{1} 3 \, 4$.

We can now restate our main theorem using the irreducible partition.

\begin{thm}\label{main theorem irred}
Let $\pi, \tau \in S_n$ where $\pi$ has $k$ irreducible intervals.  Then $\pi \sim \tau$ if and only if there exists some $\mu \in B_k$ such that $\EE_{\mu.\pi}=\EE_{\tau}$ and $\mu$ transforms $\pi$ by a sequence of valid flips.
\end{thm}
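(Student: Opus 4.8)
The plan is to reduce Theorem~\ref{main theorem irred} to Theorem~\ref{main theorem} (equivalently, to Lemma~\ref{level perm} and Lemma~\ref{bordered=valid}), using the irreducible partition $\mathcal{I}_\pi$ as a bookkeeping device. First I would establish the dictionary between signed permutations $\mu \in B_k$ acting on the $k$ irreducible intervals and ordinary permutations $\sigma \in S_{n-1}$ acting on the $n-1$ levels: given $\mu$, the associated $\sigma$ sends the block of levels forming the $i$th irreducible interval to the block of positions dictated by $|\mu(i)|$, either order-preserving or order-reversing according to the sign of $\mu(i)$. I would check that this assignment is compatible with composition (so that a product $\mu_l\cdots\mu_1$ in $B_k$ corresponds to the product of the associated level-permutations), which follows from the remark after the definition of valid action, and that $\EE_{\mu.\pi}$ is well-defined/proper precisely when the corresponding $\EE_{\sigma.\pi}$ is. The key structural input is the discussion following Lemma~\ref{irreducible}: every $\sigma$ valid with respect to $\pi$ must keep each irreducible interval contiguous \emph{and}, by the second linked condition, in increasing or decreasing order, so every valid $\sigma$ is in the image of this $B_k \to S_{n-1}$ correspondence. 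This is what lets us pass freely between the two pictures.

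Next I would prove the ``only if'' direction. Suppose $\pi \sim \tau$. By Lemma~\ref{level perm} there is $\sigma \in S_{n-1}$ with $\EE_{\sigma.\pi} = \EE_\tau$, and $\sigma$ is valid with respect to $\pi$ by definition. By the previous paragraph, $\sigma$ arises from a unique $\mu \in B_k$, so $\EE_{\mu.\pi} = \EE_\tau$. It remains to show $\mu$ transforms $\pi$ by a sequence of valid flips; this is exactly where Theorem~\ref{main theorem} does the work. Theorem~\ref{main theorem} gives that $\tau$ is obtained from $\pi$ by a sequence of valid flips $\sigma_1,\dots,\sigma_l$ in the level picture, with $\sigma_i$ valid with respect to $\sigma_{i-1}\cdots\sigma_1.\pi$. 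Each such intermediate permutation $\sigma_{i-1}\cdots\sigma_1.\pi$ has its own irreducible partition, and since $\sigma_i$ is a valid flip it flips a union of consecutive irreducible intervals there (this is the observation, stated after Lemma~\ref{irreducible}, that valid intervals of width $>1$ have irreducible borders as endpoints), hence corresponds to a signed permutation $\mu_i$ of the flip type in the appropriate $B_{k_i}$. Composing, $\mu = \mu_l\cdots\mu_1$ and $\mu$ transforms $\pi$ by a sequence of valid flips. A minor subtlety: the number of irreducible intervals can change along the way, so I would phrase the valid-flip sequence in $B$ (the union of all $B_k$) rather than in a fixed $B_k$, consistent with how ``sequence of valid flips'' was defined.

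The ``if'' direction is the easy one: if $\EE_{\mu.\pi} = \EE_\tau$ for some valid $\mu \in B_k$, then translating $\mu$ back to a level-permutation $\sigma$ gives $\EE_{\sigma.\pi} = \EE_\tau$ with $\sigma$ valid, so $\pi \sim \tau$ by Lemma~\ref{level perm} — the hypothesis that $\mu$ is a composition of valid flips is not even needed here (it is automatically true given $\EE_{\mu.\pi}=\EE_\tau$, once Theorem~\ref{main theorem} is known, but logically we only need one direction). I expect the main obstacle to be the careful verification that the $B_k$-action on irreducible intervals is well-defined and that its valid elements biject with the valid level-permutations — in particular, pinning down the second linked condition to rule out valid $\sigma$ that scramble levels \emph{within} an irreducible interval, and handling the change in $k$ across a sequence of flips cleanly. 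Everything else is repackaging Lemma~\ref{level perm}, Lemma~\ref{bordered=valid}, and Theorem~\ref{main theorem} through this dictionary.
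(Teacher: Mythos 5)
Your proposal has a genuine logical gap: it is circular relative to the structure of the paper. You derive Theorem~\ref{main theorem irred} from Theorem~\ref{main theorem}, but Theorem~\ref{main theorem} is not independently proven anywhere --- the paper states it and immediately says ``In order to prove this theorem, we need to further explore the structure of edge diagrams,'' and the final sentence of Section~\ref{Cohesive Intervals and Partitions} makes the intended logical direction explicit: Theorem~\ref{main theorem irred} is deduced from Proposition~\ref{valid flips}, and Theorems~\ref{main theorem} and~\ref{Main Thm} are then obtained as \emph{restatements} of Theorem~\ref{main theorem irred}. The entire substantive content of all three theorem versions is the ``only if'' direction: that an arbitrary valid level-permutation $\sigma$ (equivalently, valid signed permutation $\mu$) factors as a sequence of valid flips. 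That is exactly the claim of Proposition~\ref{valid flips}, which the paper proves by induction on cohesive partitions, using Lemma~\ref{decompose lemma} to peel off a proper cohesive interval and treating the base case (no proper cohesive interval) directly. Your proposal supplies no independent argument for this factorization; at the decisive moment you write ``this is exactly where Theorem~\ref{main theorem} does the work,'' which amounts to assuming the conclusion in a different notation.

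What your argument does correctly establish is the \emph{equivalence} of the two formulations: the dictionary between valid $\mu\in B_k$ acting on irreducible intervals and valid $\sigma\in S_{n-1}$ acting on levels (using the linked conditions and the observation that valid intervals of width greater than one have borders of $\mathcal{I}_\pi$ as endpoints), and the easy ``if'' direction via Lemma~\ref{level perm}. That bookkeeping is genuinely needed to justify calling Theorem~\ref{main theorem irred} a restatement of Theorem~\ref{main theorem}, and your point about the number of irreducible intervals changing along a flip sequence is a real subtlety worth handling. But to have a complete proof you must replace the appeal to Theorem~\ref{main theorem} with an actual proof of the factorization into valid flips --- i.e., reconstruct the content of Section~\ref{Cohesive Intervals and Partitions}: the definition of cohesive intervals and partitions, the decomposition Lemma~\ref{decompose lemma}, and the inductive argument of Proposition~\ref{valid flips}, including its base case where no proper cohesive interval exists.
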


%=======================================================
\section{Cohesive Intervals and Partitions} \label{Cohesive Intervals and Partitions}
%=======================================================

In this section, we introduce the notion of a cohesive partition, a generalization of the irreducible partition.  We are going to induct on the number of blocks of a partition of the edge diagram of $\pi$, so the purpose of this generalization is to have a method of coarsening the irreducible partition.

\begin{definition}
Let $\pi \in S_n$ with $k$ irreducible intervals, and let $[a,b]$ be an interval in the edge diagram of $\pi$.  We say that $[a,b]$ is \emph{cohesive} in $\pi$ if all of the following are true:
\begin{enumerate}[(1)]
\item The interval $[a,b]$ is a union of irreducible intervals;
\item for every $e \in \EE_{\pi}$, we have $e \subseteq [a,b]$, $[a,b] \subseteq e$, or $[a,b] \cap e = \emptyset$; and
\item for every $\mu \in B_{k}$ that is valid with respect to $\pi$, $\mu. [a,b]$ is an interval.
\end{enumerate}
\end{definition}

Condition (3) is difficult to check, since one would in principle have to verify the property for all signed permutations of length $k$.  Notice that every edge in $\EE_\pi$ satisfies condition (3), but most will fail (1) or (2).  All irreducible intervals satisfy conditions (1) and (3), but not necessarily condition (2), as is the case for interval $[1,4]$ in Figure~\ref{irredintex}(c) . Some examples of cohesive intervals are given in Figure~\ref{cohesive}. Since cohesive intervals are unions of irreducible intervals, we will sometimes use the borders $x_i$ of the irreducible partition to describe them. For convenience, we write 
$\{a_0,a_1,\dots,a_l\}_<$ to denote the partition into intervals $[a_i,a_{i+1}]$ for $0\le i<l$.

%=====================================================
%Cohesive Interval Example
\begin{figure}[htb]
\centering \footnotesize{
	\begin{tikzpicture} [scale=.5]
		\draw (.5,1)--(9.5,1);		
		\draw (.5,2)--(9.5,2);
		\draw (.5,5)--(9.5,5);
		\draw (.5,6)--(9.5,6);
		\draw(.5,9)--(9.5,9);
		\draw (1,1) node[left, rectangle,fill=white]{1};
		\draw (2,9) node[left, rectangle,fill=white]{9};
		\draw (3,7) node[left, rectangle,fill=white]{7};
		\draw (4,8) node[left, rectangle,fill=white]{8};
		\draw (5,6) node[left, rectangle,fill=white]{6};
		\draw (6,2) node[left, rectangle,fill=white]{2};
		\draw (7,4) node[left, rectangle,fill=white]{4};
		\draw (8,3) node[left, rectangle,fill=white]{3};
		\draw (8,5) node[rectangle, fill=white, right]{5};
		\draw[thick,*->-*](1,1)--(1,9);
		\draw[thick,*->-*](2,9)--(2,7);
		\draw[thick,*->-*](3,7)--(3,8);
		\draw[thick,*->-*](4,8)--(4,6);
		\draw[thick,*->-*](5,6)--(5,2);
		\draw[thick,*->-*](6,2)--(6,4);
		\draw[thick,*->-*](7,4)--(7,3);
		\draw[thick,*->-*](8,3)--(8,5);						
	\end{tikzpicture} }
	\caption{In the permutation $1\,9\,7\,8\,6\,2\,4\,3\,5$, all the irreducible intervals are cohesive: $ [1,2]$, $[2,5]$, $[5,6]$, $[6,9]$.  Additionally, the intervals $[2,6]$ and $[1,9]$  are cohesive.}
\label{cohesive}
\end{figure}
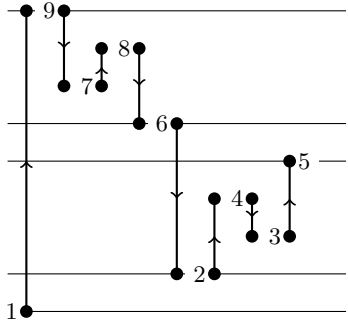

%Following is the notion that generalizes the irreducible partition.

\begin{definition}
A partition $\mathcal{P}=\{x_s=a_0,a_1,a_2, \ldots, a_l=x_t \}_<$ of a cohesive  interval $[x_s,x_t]$ is called a \emph{cohesive partition} if $[a_i,a_{i+1}]$ is irreducible or cohesive for every $1\le i<l$.
\end{definition}

In a cohesive interval, the finest cohesive partition will be the irreducible partition $\mathcal{I}_{\pi}$ restricted to $[x_s,x_t]$.  Cohesive partitions provide us a more general setting to prove our main theorem: we will consider signed permutations of the blocks in $\mathcal{P}$ that result in a valid edge diagram, and show that all such permutations are sequences of valid flips.  Then, since the interval $[1,n]$ is trivially cohesive and has cohesive partition into irreducible intervals, Theorem~\ref{main theorem irred} will follow as a particular case.

In the rest of this section, we assume that $\pi \in S_n$ has $k$ irreducible intervals, and we use
$\mathcal{P}=\{a_0,a_1,a_2, \ldots, a_l\}_<$ to denote a cohesive partition of the cohesive interval $[x_s,x_t]$.
In the following definition, $1$ stands for the identity permutation of length one and $\mathds{1}_t$ stands for the identity permutation of length $t$. We write $1^l$ to denote a sequence of $l$ ones. %$\underbrace{1,1,\dots,1}_l$

\begin{definition}
We say that $\mu \in B_l$ is valid with respect to $[x_s,x_t]_{\PP}$ if $\mathds{1}_{k-t+s+1}[1^s,\mu',1^{k-t}] \in B_k$  is valid with respect to $\pi$, where $\mu'=\mu[\mathds{1}_{m_0},\mathds{1}_{m_1},\ldots,\mathds{1}_{m_{l-1}}] \in B_{t-s}$ and $m_i$ is the number of irreducible intervals contained in $[a_i,a_{i+1}]$.
We denote the image of $[x_s,x_t]$ under the transformation of $\mu$ by $\mu.[x_s,x_t]_{\PP}$.
\end{definition}

Cohesive partitions play an important role in our proof. In some cases, $\mathcal{P}$ contains a \emph{proper cohesive interval}, which is a cohesive interval $[a_i,a_j]$ where $j\neq i+1$ and $(i,j)\neq(0,l)$.  In these cases, we are able to create two new cohesive partitions, $\mathcal{P}'=\{a_0,a_1,\ldots,a_i,a_j,\ldots,a_l \}_<$ and $\widetilde{\mathcal{P}}=\{a_i,a_{i+1},\ldots,a_j \}_<$, and use them to decompose the action of a valid $\mu$, as described in the following lemma.

\begin{lem}
 \label{decompose lemma}
Let $\mu \in B_l$ be valid with respect to $[x_s,x_t]_{\mathcal{P}}$. If $[a_i,a_j]$ is a proper cohesive interval, there exist $\alpha \in B_{l-j+i+1}$ and $\beta \in B_{j-i}$ such that $\mu = \alpha[1^{i},\beta,1^{l-j}]$.  Additionally, $\beta$ is valid with respect to $[a_i,a_j]_{\widetilde{\PP}}$.
\end{lem}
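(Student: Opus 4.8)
\textbf{Proof plan for Lemma~\ref{decompose lemma}.}

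The plan is to unwind the definition of ``valid with respect to $[x_s,x_t]_{\mathcal P}$'' and read off the claimed factorization directly from the combinatorics of signed permutations. First I would recall that $\mu\in B_l$ acts on the blocks $[a_0,a_1],\dots,[a_{l-1},a_l]$ of $\mathcal P$, permuting and possibly reversing them, and that validity means the induced signed permutation of the full irreducible partition is valid with respect to $\pi$. Since $[a_i,a_j]$ is a proper cohesive interval, condition (3) in the definition of cohesive forces $\mu.[a_i,a_j]_{\mathcal P}$ to be an interval — that is, the blocks $[a_i,a_{i+1}],\dots,[a_{j-1},a_j]$ are sent by $\mu$ to a contiguous run of positions, in either increasing or decreasing order of absolute value (this last point needs the analogue of the second linked condition, which I would extract from the fact that each of these blocks is itself a union of irreducible intervals whose internal order is rigid). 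Granting this, $\mu$ restricted to the indices $\{i+1,\dots,j\}$ is, up to the global reversal/sign bookkeeping, a single ``block'' inside $\mu$, so $\mu$ is literally an inflation: there is a unique $\alpha\in B_{l-j+i+1}$ (the pattern of $\mu$ after collapsing those $j-i$ entries to one slot) and a unique $\beta\in B_{j-i}$ (the pattern traced out inside that slot, reverse-complemented if the corresponding entry of $\alpha$ is negative) with $\mu=\alpha[1^{i},\beta,1^{l-j}]$. This is exactly the definition of inflation applied in reverse, so the only thing to check is well-definedness, which is the contiguity statement just discussed.

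For the second assertion, that $\beta$ is valid with respect to $[a_i,a_j]_{\widetilde{\mathcal P}}$, I would go back to the definition of validity for a cohesive subinterval: I must show $\mathds 1_{k-m'+1}[1^{s'},\beta',1^{k-t'}]$ is valid with respect to $\pi$, where $\beta'=\beta[\mathds 1_{m_i},\dots,\mathds 1_{m_{j-1}}]$ and $s',t'$ are the irreducible-partition borders corresponding to $a_i,a_j$. The key observation is that applying $\mu=\alpha[1^i,\beta,1^{l-j}]$ and then ``undoing'' the $\alpha$ part amounts to the same as applying $\beta$ alone to the sub-edge-diagram on $[x_{s'},x_{t'}]$: because $[a_i,a_j]$ is cohesive, any valid $\mu$ sends this subinterval to an interval, and the restriction of a valid permutation of $\pi$ to a cohesive sub-block is again a valid permutation of the sub-edge-diagram (one can see $\mathcal E_{\mu.\pi}$ being proper forces the sub-configuration to be proper, since a proper edge diagram restricted to a cohesive interval still forms a sub-path). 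Composing $\mu$ with the inverse of a valid permutation that realizes $\alpha$ on $\pi$ — which exists because $\alpha$ is valid with respect to the coarser partition $\mathcal P'$, a point I would either prove here or note follows from the same contiguity analysis — isolates the action of $\beta$, and invariance of validity under composition (the remark at the end of Section~\ref{Equivalence}, transported to signed permutations) finishes it.

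The main obstacle I anticipate is the bookkeeping around signs and the reverse-complement convention: when a cohesive block gets flipped by $\mu$, the internal pattern $\beta$ must be recorded as $\beta$ versus $\beta^{RC}$ consistently with how inflation is defined, and one has to make sure the identifications $\mu'=\mu[\mathds 1_{m_0},\dots]$ and $\beta'=\beta[\mathds 1_{m_i},\dots]$ are compatible under the collapse. I would handle this by working entirely at the level of where individual levels $\ubar{r}$ go: track a level inside $[a_i,a_j]$ through $\mu$, observe its image depends only on (a) which block of $\widetilde{\mathcal P}$ it lies in and (b) the position and sign that $\mu$ assigns to that block, and check the two-step description ``$\beta$ then place via $\alpha$'' reproduces it. Once the level-wise description is pinned down, both the factorization $\mu=\alpha[1^i,\beta,1^{l-j}]$ and the validity of $\beta$ with respect to $[a_i,a_j]_{\widetilde{\mathcal P}}$ are immediate from the definitions, with no further geometric input needed beyond cohesiveness of $[a_i,a_j]$.
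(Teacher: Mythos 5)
Your first step---deducing from condition (3) of cohesiveness, applied to the element $\mathds{1}_{k-t+s+1}[1^s,\mu',1^{k-t}]\in B_k$ that is valid with respect to $\pi$ by hypothesis, that $\mu$ sends the positions $i+1,\dots,j$ to a contiguous set of absolute values and hence factors as an inflation $\alpha[1^i,\beta,1^{l-j}]$---is correct (and the worry about ``increasing or decreasing order'' is unnecessary: contiguity of the image alone yields the factorization, since $\beta$ may be an arbitrary signed pattern). But your claim that $\alpha$ and $\beta$ are \emph{unique} is false, and the error is not cosmetic. There are exactly two factorizations: one in which the collapsed entry of $\alpha$ is positive with internal pattern $\beta$, and one in which it is negative with internal pattern $\beta^{RC}$, because by the definition of inflation these produce the same $\mu$. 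The entire content of the lemma is that \emph{one of these two choices} yields a $\beta$ that is valid with respect to $[a_i,a_j]_{\widetilde{\PP}}$, and your proposal never confronts the choice. The paper's (omitted) argument is exactly about making this selection: it examines the edge diagram obtained by applying the candidate $\beta$ inside $[a_i,a_j]$ and the identity outside, and checks that the vertices $a_i$ and $a_j$ each have in-degree and out-degree at most one, which forces the result to be a path.

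Your route to the validity of $\beta$ has two further gaps. First, recall that ``$\beta$ is valid with respect to $[a_i,a_j]_{\widetilde{\PP}}$'' means that the element of $B_k$ applying $\beta'=\beta[\mathds{1}_{m_i},\dots,\mathds{1}_{m_{j-1}}]$ inside $[a_i,a_j]$ and the \emph{identity} outside is valid with respect to $\pi$, i.e.\ the entire resulting edge diagram is a path. Even granting that the restriction of $\EE_{\mu.\pi}$ to the interval $\mu.[a_i,a_j]$ is a path, this does not give what is needed: the internal sub-path, after being modified by $\beta$ but with the external edges left untouched, must reconnect with those external edges at $a_i$ and $a_j$, and this reconnection is precisely what distinguishes $\beta$ from $\beta^{RC}$---generically only one orientation places the endpoints of the internal sub-path where the external edges expect them. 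Second, your composition argument presupposes that $\alpha$ is valid with respect to the coarsened partition $\PP'$; this is not part of the lemma, does not follow from contiguity (an interval image is far weaker than the image diagram being a path), and the natural proof of it peels $\beta$ off from $\mu$ first---so as written your argument is circular. To repair the proof, drop the detour through $\alpha$ and argue directly about the diagram obtained by inserting $\beta'$ into the identity, carrying out the degree count at $a_i$ and $a_j$ and showing that it succeeds for exactly one of the two sign choices.
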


The idea of the proof, which is omitted in this extended abstract, is that there are only two obvious choices for our $\alpha,\beta$ pair. In one of them, $\alpha(i)$ is positive, and in the other it is negative. One can show that one of these choices makes $\beta$ a valid permutation.  Since $\mu$ is valid with respect to $[x_s,x_t]_{\PP}$, this amounts to examining the behavior of the path at the vertices $a_i$ and $a_j$ after applying $\beta$.  Showing that there is at most one indegree and outdegree at both $a_i$ and $a_j$ is enough to show that the result is a path.

We now have all the tools needed to handle the main theorem. In fact we prove the following more general statement about cohesive partitions.

\begin{prop} \label{valid flips}
Every $\mu \in B_l$ that is valid with respect to $[x_s,x_t]_{\mathcal{P}}$ transforms $[x_s,x_t]_{\mathcal{P}}$ by a sequence of valid flips.
\end{prop}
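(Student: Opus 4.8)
The plan is to induct on the number $l$ of blocks in the cohesive partition $\mathcal{P}$. For the base case, if $l=1$ there is nothing to prove, since the only signed permutation in $B_1$ is the identity (flipping a single block of width one, i.e.\ the trivial flip, suffices). So assume $l\ge 2$ and that the statement holds for all cohesive partitions with fewer blocks.

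The dichotomy that drives the argument is whether or not $\mathcal{P}$ contains a proper cohesive interval $[a_i,a_j]$. \textbf{Case 1: such a proper cohesive interval exists.} Apply Lemma~\ref{decompose lemma} to write $\mu=\alpha[1^i,\beta,1^{l-j}]$ with $\alpha\in B_{l-j+i+1}$, $\beta\in B_{j-i}$, where $\beta$ is valid with respect to $[a_i,a_j]_{\widetilde{\mathcal{P}}}$ and (one checks directly from the definitions, since $\alpha$ describes the induced motion of the coarsened blocks) $\alpha$ is valid with respect to $[x_s,x_t]_{\mathcal{P}'}$. Both $\widetilde{\mathcal{P}}$ and $\mathcal{P}'$ have strictly fewer than $l$ blocks (because $[a_i,a_j]$ is proper, so $\widetilde{\mathcal{P}}$ omits at least one block on each side together, and $\mathcal{P}'$ collapses at least two blocks into one). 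By the inductive hypothesis, $\beta$ transforms $[a_i,a_j]_{\widetilde{\mathcal{P}}}$ by a sequence of valid flips $\beta_1,\dots,\beta_r$, and $\alpha$ transforms $[x_s,x_t]_{\mathcal{P}'}$ by a sequence of valid flips $\alpha_1,\dots,\alpha_m$. The key point is that performing $\beta$ on the sub-partition and then $\alpha$ on the coarsened partition can be interleaved: since $[a_i,a_j]$ remains cohesive (hence remains an interval) throughout the action of $\alpha$, and since applying $\beta$ only rearranges levels strictly inside $[a_i,a_j]$, the flips $\beta_s$ (reinterpreted as flips of $\mathcal{P}$ fixing the blocks outside $[a_i,a_j]$) are valid with respect to the appropriate intermediate permutations, and likewise for the $\alpha_t$ afterwards. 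Concatenating these two sequences and checking that each $\alpha_t$ flips an interval that is a union of irreducible intervals of the intermediate edge diagram (using that cohesiveness of $[a_i,a_j]$ is preserved) exhibits $\mu$ as a sequence of valid flips.

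\textbf{Case 2: $\mathcal{P}$ contains no proper cohesive interval.} This is the irreducible-type situation, and it is the crux of the proof. Here I would argue that $\mu$ itself must already be essentially a single valid flip, or a very short product of them. The idea: consider the endpoints of $[x_s,x_t]$ and track, using Properties (1) and (2) of edge diagrams from Section~\ref{Valid Flips}, which blocks can move where. Because $\mu$ is valid, $\mu.[x_s,x_t]_{\mathcal{P}}$ is an interval and the edges of $\mathcal{E}_{\mu.\pi}$ form a path; combined with the absence of proper cohesive sub-intervals, the only rearrangements of the $l$ blocks that can yield a path are (up to the trivial one) a single flip of the whole interval $[x_s,x_t]$, or a flip of an initial/final segment that happens to be cohesive — but the latter is excluded by hypothesis unless it is the whole interval. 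One must be careful: there could be several valid flips available (the partition may have valid sub-intervals that are not cohesive in the ambient $\pi$ but become the relevant flip intervals at the level of $\mathcal{P}$), and one reduces $\mu$ by one such flip, lands in a cohesive partition of the same interval with the same blocks, and either re-enters Case 1 or makes progress toward the identity. Formalizing this reduction — showing the pool of available valid flips is never empty when $\mu$ is nontrivial, and that flipping decreases a suitable complexity measure (e.g.\ the number of ``descents'' or sign-changes of $\mu$, or the number of blocks whose relative order/orientation disagrees with the identity) — is the main obstacle, and is where Properties (1) and (2) and the path/cycle structure of $\mathcal{E}_{\mu.\pi}$ do the real work.

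To make the induction well-founded across both cases, I would induct on the pair (number of blocks $l$, complexity of $\mu$) lexicographically, or equivalently prove Case 2 by a secondary induction on a statistic of $\mu$ that strictly decreases under composition with a valid flip. The hardest part, as noted, is Case 2: verifying that in a cohesive partition with no proper cohesive sub-interval, every valid $\mu$ can be ``peeled'' by a valid flip. Everything else — the interleaving in Case 1, the bookkeeping that valid flips of sub-partitions lift to valid flips of $\mathcal{P}$, and the base case — is routine given Lemmas~\ref{valid flip}, \ref{bordered=valid}, and~\ref{decompose lemma}.
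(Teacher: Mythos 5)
Your overall architecture --- induction on the number of blocks, splitting on whether $\mathcal{P}$ contains a proper cohesive interval, and using Lemma~\ref{decompose lemma} to factor $\mu=\alpha[1^i,\beta,1^{l-j}]$ and recurse on $\widetilde{\mathcal{P}}$ and $\mathcal{P}'$ --- is exactly the strategy the paper outlines for this proposition. Two points in your Case 1 deserve more than ``one checks directly'': Lemma~\ref{decompose lemma} only asserts that $\beta$ is valid with respect to $[a_i,a_j]_{\widetilde{\mathcal{P}}}$, so the validity of $\alpha$ with respect to $[x_s,x_t]_{\mathcal{P}'}$ is an additional claim you must prove (it needs the fact that $[a_i,a_j]$ remains an interval under every valid signed permutation, which is what cohesiveness gives you); and the ``interleaving'' requires verifying that a valid flip of the sub-partition $\widetilde{\mathcal{P}}$ lifts to a valid flip of the ambient diagram, using condition (2) of cohesiveness to control how edges meet $[a_i,a_j]$. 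Both are fixable with the tools at hand.

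The genuine gap is Case 2, and you have essentially flagged it yourself. Your opening claim there --- that with no proper cohesive interval a valid $\mu$ ``must already be essentially a single valid flip'' --- is not true as stated: nothing prevents a long product of flips of valid intervals even when no \emph{proper cohesive} sub-interval exists, precisely because valid intervals need not be cohesive (as you then concede). Your fallback is a peeling argument, but neither of its two pillars is established: (i) that whenever $\mu$ is valid and nontrivial there exists at least one valid flip whose removal leaves a valid signed permutation of the same partition, and (ii) that some statistic of $\mu$ strictly decreases under this removal so the process terminates. Establishing (i) requires a concrete structural analysis --- using the parity property (1), the no-cycle property (2), and the path structure of $\EE_{\mu.\pi}$ --- of what a valid $\mu$ can do to a partition with no proper cohesive interval; this is exactly the part the paper says it must handle ``separately and directly,'' and it is where the real content of the proposition lives. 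As written, your proposal is a sound plan for Case 1 and a restatement of the problem for Case 2.
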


The idea of the proof, again omitted, is that if $\mathcal{P}$ contains a proper cohesive interval $[a_i,a_j]$, we can then use induction to find a sequence of valid flips that first permutes the intervals inside of $[a_i,a_j]$ and then the intervals outside of $[a_i,a_j]$.  The difficulty  comes when there is no proper cohesive interval. We deal with this case separately and directly in the proof.

Theorem~\ref{main theorem irred}, and thus its equivalent restatements Theorems~\ref{main theorem} and~\ref{Main Thm}, follow now as a consequence of Proposition \ref{valid flips}.

%========================================================
\section{Future Work} \label{Future Work}
%========================================================

%\medskip

We have defined a natural equivalence $\sim$ on permutations and characterized the equivalence classes using valid flips. We conjecture that our equivalence classes describe precisely when two permutations are obtained with the same probability in a random walk regardless of the probability distribution on the steps. We are currently working on proving this statement.

\begin{conj}
For $\pi, \tau \in S_n$, $\PPP(p(X_1,X_2,\ldots,X_{n-1})=\pi)=\PPP(p(X_1,X_2,\ldots,X_{n-1})=\tau)$ for every probability distribution on the i.i.d. random variables $X_1,X_2,\ldots,X_{n-1}$ if and only if $\pi \sim \tau$.
\end{conj}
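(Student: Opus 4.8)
The forward direction ($\pi\sim\tau$ implies equal probabilities for every continuous distribution) is already established in Section~\ref{Equivalence}, so only the converse needs an argument. The plan is to pass from the probabilistic statement to a statement about the cones $D_\pi$, and then to reconstruct $D_\pi$, up to a coordinate permutation, from the functional $f\mapsto\PPP(p(X)=\pi)$.

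\emph{Step 1: geometric reformulation.} For a continuous distribution with density $f$ we have $\PPP(p(X)=\pi)=\int_{D_\pi}\prod_{i=1}^{n-1}f(x_i)\,d\xx$. Since multiplication by $L(\pi)$ is a volume-preserving bijection (recall $\det L(\pi)=\pm1$) from $\RR^{n-1}_{>0}$ onto $D_\pi$, and the $i$-th coordinate of $L(\pi)\yy$ equals $\sgn(\pi(i+1)-\pi(i))\sum_{\ubar j\in e_i^\pi}y_j$, the change of variables $\xx=L(\pi)\yy$ gives
\[
\PPP(p(X)=\pi)=\int_{\RR^{n-1}_{>0}}\ \prod_{i=1}^{n-1}f\!\Bigl(\sgn(\pi(i+1)-\pi(i))\sum_{\ubar j\in e_i^\pi}y_j\Bigr)\,d\yy .
\]
Thus $\PPP(p(X)=\pi)$ depends only on the multiset of \emph{signed level-intervals} $\bigl\{\bigl(\sgn(\pi(i+1)-\pi(i)),\{j:\ubar j\in e_i^\pi\}\bigr)\bigr\}_{i}$ carried by the edges of $\EE_\pi$. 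Relabeling the levels by $\sigma\in S_{n-1}$ turns this multiset into the one of $\EE_{\sigma.\pi}$, so two permutations have signed level-interval multisets related by a level relabeling exactly when $\EE_{\sigma.\pi}=\EE_\tau$ for some $\sigma$, i.e.\ by Lemma~\ref{level perm} exactly when $\pi\sim\tau$ (note the reverse-complement is the particular relabeling $j\mapsto n-j$). Hence the conjecture is equivalent to: if the displayed integral agrees for $\pi$ and $\tau$ for every density $f$, then the two signed level-interval multisets differ by a permutation of $\{1,\dots,n-1\}$; the implication ``$\Leftarrow$'' is just the change of variables $\yy\mapsto\sigma^{-1}.\yy$.

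\emph{Step 2: probing with a rich family of densities, and induction.} I would recover the multiset from the functional by evaluating the integral on a tractable but sufficiently sensitive family of densities. Symmetric densities are provably too weak: for them each factor ignores the sign $\sgn(\pi(i+1)-\pi(i))$, so the functional sees only the \emph{unsigned} intervals and cannot separate $\pi$ from its reverse $\pi^R$ (which is generally not $\sim\pi$, e.g.\ $132$ versus $231$). A convenient asymmetric family is the shifted Gaussians $f_c(x)=(2\pi)^{-1/2}e^{-(x-c)^2/2}$: expanding the squares and using $\sgn(\cdot)^2=1$ one finds $\PPP(p(X)=\pi)=(2\pi)^{-(n-1)/2}e^{-(n-1)c^2/2}\int_{\RR^{n-1}_{>0}}e^{-\frac12\|L(\pi)\yy\|^2+c\,\ell_\pi(\yy)}\,d\yy$ for an explicit linear form $\ell_\pi$, so that (the common factor $e^{-(n-1)c^2/2}$ cancelling) the Taylor coefficients in $c$ give a whole hierarchy of moment identities satisfied by $D_\pi$ and $D_\tau$. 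Equally useful is the two-point family $t\,\delta_{-1}+(1-t)\,\delta_a$: matching coefficients of the resulting polynomial in $t$ shows that for every $a>0$ and every $k$, the number of $\{-1,a\}$-valued step sequences with exactly $k$ steps equal to $a$ that realize $\pi$ equals the corresponding count for $\tau$. I would then induct on $n$: by making one step overwhelmingly large, the walk splits at that position into two shorter walks (on the prefix and the suffix of steps), which links $\PPP$ at length $n$ to $\PPP$ at smaller lengths; carrying this out systematically, in the spirit of the irreducible/cohesive decompositions of Sections~\ref{Irreducible Intervals}--\ref{Cohesive Intervals and Partitions}, would reduce everything to base cases that can be checked directly.

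\emph{The main obstacle.} The crux, and the reason this is still a conjecture, is the \emph{separation} property underlying Step 2: that $f\mapsto\PPP(p(X)=\pi)$ is injective on $\sim$-classes. Geometrically the $D_\pi$ are the full-dimensional chambers of the arrangement of the $\binom{n}{2}$ hyperplanes $\{x_{a+1}+\dots+x_b=0\}$, and the question is whether two such chambers that are \emph{not} coordinate-permutation images of one another can nonetheless integrate identically against every product density; this must be ruled out. I expect the right tool to be the facet (support-function) data of the cones $D_\pi$ together with the valid-flip combinatorics already developed here, with the inductive peeling of Step 2 reducing the whole problem to a finite verification for single-irreducible-interval permutations --- and this is exactly the step the authors report to be in progress.
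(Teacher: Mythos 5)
This statement is an open conjecture in the paper --- the authors explicitly say they are still working on it and give no proof --- so there is no argument of theirs to compare against. Your proposal does not close the conjecture either, and you say so yourself. What you do establish is sound but is only a reformulation: Step~1 correctly converts the probabilistic statement into the assertion that the functional $f\mapsto\int_{\RR^{n-1}_{>0}}\prod_i f\bigl(\sgn(\pi(i+1)-\pi(i))\sum_{\ubar{j}\in e_i^\pi}y_j\bigr)\,d\yy$ determines the multiset of signed edge--level sets up to a relabeling of the levels, and (via Lemma~\ref{level perm}, plus the observation that a relabeling matching this multiset to that of $\tau$ automatically sends edges to intervals) that this is equivalent to $\pi\sim\tau$. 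The ``if'' direction you dispatch this way is exactly what Section~\ref{Equivalence} already proves. Everything of substance lies in the ``only if'' direction, and there your Step~2 produces only necessary conditions --- moment identities from shifted Gaussians, counting identities from two-point mixtures --- with no argument that these, or any collection of test densities, suffice to reconstruct the signed set system up to relabeling. That separation property is precisely the conjecture, so the proposal is circular at the decisive point.

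Two further concrete problems with Step~2 as sketched. First, the two-point family $t\,\delta_{-1}+(1-t)\,\delta_a$ lies outside the framework: the paper defines $p$ only when the walk has no repeated values and restricts to continuous distributions, and for atomic step distributions ties occur with positive probability; you would need to replace the atoms by narrow continuous bumps and control the error, which is routine but not done. Second, the inductive ``make one step overwhelmingly large'' device is not available for i.i.d.\ steps: you cannot designate a single position to carry the large step, only tilt the common distribution so that a \emph{random} subset of positions is large; the resulting expansion in $t$ mixes contributions from all positions and all subset sizes, and disentangling the single-large-step terms across positions is again the unresolved identification problem, not a reduction of it. So the proposal should be read as a plausible research plan --- consistent in spirit with the authors' own stated program of reducing to irreducible/cohesive pieces --- rather than a proof; the key injectivity step is missing.
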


\section*{Acknowledgements}
The authors thank Adeline Pierrot for useful ideas during the early stages of this work. The first author was partially supported by grant \#280575 from the Simons Foundation and by grant H98230-14-1-0125 from the NSA.

\bibliographystyle{plain}
\bibliography{Thesis_References}

\end{document}